\providecommand{\U}[1]{\protect\rule{.1in}{.1in}}
\newtheorem{theorem}{Theorem}[section]
\newtheorem{lemma}[theorem]{Lemma}
\newtheorem{proposition}[theorem]{Proposition}
\newenvironment{proof}[1][Proof]{\noindent \textbf{#1.} }{\  \rule{0.5em}{0.5em}}
\newcounter{scheme}[section]
\newcounter{definition}[section]
\newcounter{remark}[section]
\renewcommand{\thescheme}{Scheme~\thesection.\arabic{scheme}}
\renewcommand{\thedefinition}{Definition~\thesection.\arabic{definition}}
\renewcommand{\theremark}{Remark~\thesection.\arabic{remark}}
\numberwithin{equation}{section}
\renewcommand \arraystretch{1.5}
\begin{document}
\title{A novel second order scheme with one step for forward backward stochastic differential equations}
\author{Qiang Han \thanks{School of Mathematical Science, Yangzhou University, Yangzhou 225002, China.
Email: hanqiang@yzu.edu.cn.}
\and Shihao Lan\thanks{School of Mathematical Science, Yangzhou University, Yangzhou 225002, China. Email: mx120230338@stu.yzu.edu.cn}
\and Quanxin Zhu\thanks{CHP-LCOCS, School of Mathematics and Statistics, Hunan Normal University, Changsha 410081, China. Email: zqx22@126.com
(Corresponding author).  This work was jointly supported by the National Natural Science Foundation of
China (62173139), and the Natural Science Foundation of Hunan Province, China (2023JJ30388).}}


\maketitle

\textbf{Abstract}.
In this paper, we present a novel explicit second order scheme with one step for solving the forward backward stochastic differential equations, with the Crank-Nicolson method as a specific instance within our proposed framework. We first present a rigorous stability result, followed by precise error estimates that confirm the proposed novel scheme achieves second-order convergence. The theoretical results for the proposed methods are supported by numerical experiments.

{\textbf{Keywords}.} second order scheme with one step,  forward backward stochastic differential equations, numerical analysis.

\textbf{AMS subject classifications.} 60H35, 60H10, 65C20, 65C05

\addcontentsline{toc}{section}{\hspace*{1.8em}Abstract}

\section{Introduction}

Consider a complete probability space $(\Omega, \mathcal{F}, \mathbb{F}, \mathbb{P})$, where $\mathbb{F} = (\mathcal{F}_{t})_{0 \leq t \leq T}$ represents the natural filtration generated by a standard $q$-dimensional Brownian motion. Within this probability space, this paper does not prioritize the regularity of the solutions to the forward backward stochastic differential equations (FBSDEs) but rather focuses on probabilistic numerical methods for solving them:

\begin{equation}
\left\{
\begin{aligned}
X_{t}= & X_{0}+\int_{0}^{t}b(s,X_{s})ds+\int_{0}^{t}\sigma(s,X_{s}%
)dW_{s},\quad\qquad\qquad(SDE)\\
Y_{t}= & \Phi(X_{T})+\int_{t}^{T}f(s,Y_{s},Z_{s})ds-\int_{t}^{T}%
Z_{s}dW_{s},\qquad\qquad(BSDE)
\end{aligned}
\right.  \label{dFBSDE}%
\end{equation}
where $T > 0$ denotes a fixed terminal time; $X_{0} \in \mathbb{R}^q$ provides the initial condition for the SDE; $\Phi(X_{T}) \in \mathbb{R}^n$ specifies the terminal condition for the BSDE; $b: [0,T] \times \mathbb{R}^q \to \mathbb{R}^q$ is the drift coefficient; $\sigma: [0,T] \times \mathbb{R}^q \to \mathbb{R}^{q \times q}$ represents the diffusion matrix; $f: [0,T] \times \mathbb{R}^n \times \mathbb{R}^{n \times q} \to \mathbb{R}^n$ is the generator function; and $\Phi: \mathbb{R}^q \to \mathbb{R}^n$ is a real-valued function. To facilitate analysis, we assume that the functions $b$, $\sigma$, $f$, and $\Phi$ adhere to the following conditions:
\begin{description}
\item[(i)]
The functions $b$ and $\sigma \in C_b^1$, where $C_b^k$ denotes the space of continuous functions with uniformly bounded derivatives up to order $k$. In particular, we assume that:
\[
\sup\limits_{0\le t\le T}\{|b(t,0)|+|\sigma(t,0)|\}\leq L,
\]
with the non-negative constant $L$ representing the Lipschitz constants for all relevant functions.

\item[(ii)]
We assume that $\sigma$ satisfies:
$$
\sigma(t,x)\sigma^\top(t,x)\ge \frac{I_q}{L},  \forall (t,x)\in[0,T]\times \mathbb{R}^{q}.
$$

\item[(iii)]
$b$, $\sigma$, $f$, and $\Phi \in \mathcal{C}_L$, where $\mathcal{C}_L$ denotes the set of uniformly Lipschitz continuous functions with respect to the spatial variables. Additionally, we assume:
\[
\sup_{0 \leq t \leq T} \left\{|f(t, 0, 0)| + |\Phi(0)|\right\} \leq L.
\]
\end{description}

Under these conditions, the FBSDEs (\ref{dFBSDE}) is well-posed. Additionally, by taking the conditional expectation on both sides of the backward component, the integrands are shown to be continuous with respect to time.

The well-posedness of the FBSDEs (\ref{dFBSDE}), including both existence and uniqueness of the solutions, is established in \cite{PEPS90}. Moreover, the papers \cite{PS91,PEPS92} show that the solution $(Y_t, Z_t)$ to the FBSDEs (\ref{dFBSDE}) can be expressed as
\begin{equation}
Y_{t} = u(t,X_{t}),\qquad Z_{t} = \nabla u(t,X_{t})\sigma(t,X_{t})
\qquad\forall t\in[0,T],\label{2-2}%
\end{equation}
where $\nabla u$ denotes the gradient of $u(t,x)$ with respect to $x$; $u(t,x) \in C_b^{1,2}([0,T] \times \mathbb{R}^q)$ is the solution of the following nonlinear parabolic partial differential equation:
\begin{equation}
\mathcal{L}^{(0)}u(t,x)+f(u(t,x),\nabla u(t,x)\sigma(t,x))=0,
\label{2-3}%
\end{equation}
with the terminal condition $u(T, x) = \Phi(x)$; and $\mathcal{L}^{(0)}=\frac{\partial}{\partial t}+\sum\limits_{i=1}^{q}b_{i}(t,x)\frac{\partial
}{\partial x_{i}}+ \frac{1}{2}\sum\limits_{i,j=1}^{q}(\sigma\sigma^{\top})_{ij}(t,x)%
\frac{\partial^{2}}{\partial x_{i}\partial x_{j}}$.

The interest in FBSDEs has grown among researchers due to their extensive applications across disciplines, including fields such as mathematical finance \cite{CEZ23,EPQ97}, stochastic optimal control problems \cite{ZZZQ24,ZYZQ24}, risk measures \cite{KAWJXB92,XY16}, and more within the social and natural sciences.
 However, finding analytical solutions to FBSDEs is rarely feasible because the majority of these equations are complex. Thus, numerical algorithms have to be constructed to approximate their solutions. Consequently, significant work has been devoted to the numerical solutions of FBSDEs.
 Specifically, the Euler schemes, which have been shown to achieve a convergence order of $\frac{1}{2}$, are discussed in \cite{ZJ04, BBTN04, GELC07, PSXM11, HMJL21}. For higher orders of convergence, the multistep schemes, which generalize the Euler schemes by utilizing more previously known information, have been developed and analyzed in \cite{BCDR07, BCSM23, BCSJ12, CJF14, GELSJGVC20, GETP16b,  HQJS21b, HQJS21a, HQJS21c, HQJS22, TXXJ22}. However, since the multistep schemes require multiple previous solution points to calculate the next one, they necessitate several initial values, increasing computational complexity.

To address the limitations of the multistep schemes, which require multiple initial values, and to maintain higher orders of convergence, this paper introduces a novel one step second order numerical scheme that generalizes the Euler schemes. The proposed scheme, developed using a predictor-corrector approach, is entirely explicit for $Y$ and $Z$.
When solving FBSDEs, the explicit schemes compute the next time step directly from known values, bypassing the need to solve the nonlinear equations required by implicit methods, thereby enabling faster computations. Additionally, the explicit schemes avoid the complex matrix operations and iterative processes inherent in implicit methods, significantly reducing computational complexity, particularly in high-dimensional problems.
Moreover, a stability result will be established to provide precise error estimates and confirm that our method achieves second order convergence.
The key distinctions between this paper and the references \cite{ZWCLPS06, CDMK14} are that our one step second order scheme is fully explicit for both $Y$ and $Z$, whereas the schemes in \cite{ZWCLPS06} and \cite{CDMK14} are explicit for $Y$ while being implicit for $Z$. Furthermore, when $\alpha = 1$, our approach simplifies to the Crank-Nicolson method discussed in \cite{CDMK14, ZWCLPS06}, highlighting the Crank-Nicolson method as a particular case within our framework.

The structure of this paper is as follows: Section 2 outlines the explicit one step method used to solve the FBSDEs (\ref{dFBSDE}). In Section 3, we conduct a comprehensive stability analysis of the proposed method. Section 4 focuses on evaluating the local truncation errors for $Y$ and $Z$, and discusses the second order convergence of the numerical approach. In Section 5, numerical experiments are presented to verify the theoretical findings. Lastly, Section 6 concludes the paper.



\section{Novel one step discretization of the BSDE}

This section presents the time-discretization of the BSDE in (\ref{dFBSDE}) using a novel one-step scheme. To achieve this, a uniform discrete mesh $\pi = \{t_0, t_1, \cdots, t_N\}$ is defined over the time interval $[0, T]$, with the step size $h_t = \frac{T}{N}$, where $N \in \mathbb{N}^+$. The BSDE in (\ref{dFBSDE}) is thus expressed at the mesh points $t_i$ as follows
\begin{equation}
Y_{t_i}= Y_{t_{i+1}} + \int_{t_{i}}^{t_{i+1}} f_sds - \int_{t_{i}%
}^{t_{i+1}} Z_{s} d W_{s}, \label{FBSDE-t-i}%
\end{equation}
where $f_s = f(s,Y_{s},Z_{s})$.
Applying conditional expectations to both sides of (\ref{FBSDE-t-i}), we get
\begin{equation}%
Y_{t_i}  = \mathbb{E}_{t_i}^x[Y_{t_{i+1}}] + \int_{t_{i}}^{t_{i+1}} \mathbb{E}_{t_i}^x[f_{s}]ds,
\label{FBSDE-t-i-E}%
\end{equation}
where $\mathbb{E}_{t_i}^x[\cdot]=\mathbb{E}[\cdot|\mathcal{F}_{t_i},X_{t_i}=x]$.
As explained in Section 1, the integrand $\mathbb{E}_{t_i}^x[f_{s}]$ in the equation above depends deterministically on $s$. Thus, the one step method can be used to approximate it, namely
\begin{align}
 \int_{t_i}^{t_{i+1}}\mathbb{E}_{t_i}^x[f_{s}]ds =h_t \mathbb{E}_{t_i}^x[\frac{1}{2\alpha}f_{t_{i+1-\alpha }}+ (1-\frac{1}{2\alpha})f_{t_{i+1}} ] + R^i_{yf}, \label{FBSDE-err-y}
\end{align}
where $f_{t_{i+1-\alpha}}=f(t_{i+1-\alpha},Y_{t_{i+1-\alpha}},Z_{t_{i+1-\alpha}})$, $\alpha\in(0,1]$ and
$ R^i_{yf} = \int_{t_i}^{t_{i+1}}\mathbb{E}_{t_i}^x[f_s]ds - h_t \mathbb{E}_{t_i}^x[\frac{1}{2\alpha}f_{t_{i+1-\alpha }}+ (1-\frac{1}{2\alpha})f_{t_{i+1}}]$.
Inserting (\ref{FBSDE-err-y}) into (\ref{FBSDE-t-i-E}), we obtain
\begin{align}%
Y_{t_i}  = \mathbb{E}_{t_i}^x[Y_{t_{i+1}} + \frac{h_t}{2\alpha}f_{t_{i+1-\alpha }}+ h_t(1-\frac{1}{2\alpha})f_{t_{i+1}}] + R^i_{yf}.
\label{FBSDE-err-y-1}%
\end{align}

The values of $(Y_{t_{i+1-\alpha}},Z_{t_{i+1-\alpha}})$ are at non-grid points and unknown for $i=0,1,\cdots,N-1$.
Calculating $Y_{t_i}$ with an explicit numerical scheme requires approximating $Y_{t_{i+1-\alpha}}$ and $Z_{t_{i+1-\alpha}}$ in $f_{t_{i+1-\alpha}}$.
Firstly, we approximate the value of $Y_{t_{i+1-\alpha}}$ in $f_{t_{i+1-\alpha}}$ and then the computation expression with respect to $Z_{t_{i+1-\alpha}}$ will be given soon.
We approximate the value of $Y_{t_{i+1-\alpha}}$  by the explicit Euler scheme, namely
\begin{equation}%
Y_{t_{i+1-\alpha}}  = \mathbb{E}_{t_{i+1-\alpha}}^x[Y_{t_{i+1}}+ \alpha h_tf_{t_{i+1}}]+\widetilde{R}^i_{yf},
\label{FBSDE-err-y-2}%
\end{equation}
where $\mathbb{E}_{t_{i+1-\alpha}}^x[\cdot]=\mathbb{E}[\cdot|\mathcal{F}_{t_{i+1-\alpha}},X_{t_{i+1-\alpha}}=x]$;
$f_{t_i}=f(t_i,Y_{t_i},Z_{t_i})$ for $i=0,1,\cdots,N-1$;
$\widetilde{R}^i_{yf} =  \int_{t_{i+1-\alpha}}^{t_{i+1}} \mathbb{E}_{t_{i+1-\alpha}}^x[f_{s}]
ds-\alpha h_t\mathbb{E}_{t_{i+1-\alpha}}^x[f_{t_{i+1}} ]$.
Thus, the computation expression with respect to $Y_{t_{i+1-\alpha}}$ is,
\begin{align}
Y^\pi_{i+1-\alpha}
=&\mathbb{E}_{t_{i+1-\alpha}}^x\left[Y^\pi_{i+1}+\alpha h_tf^\pi_{i+1}\right],\label{E-GBSDE-t-i-pi}
\end{align}
where $f_{i}^{\pi}=f(t_{i},Y_{i}^{\pi},Z_{i}^{\pi})$.

Then, we derive the expression for $Z$. By multiplying (\ref{FBSDE-t-i}) with $\Delta W_{i,i+1}^\top:= (W_{t_{i+1}}-W_{t_i})^\top$ and applying the conditional expectation, we obtain
\begin{equation}%
0= \mathbb{E}_{t_i}^x[Y_{t_{i+1}}\Delta W_{i,i+1}^{\top}]+\int_{t_{i}
}^{t_{i+1}}\mathbb{E}_{t_i}^x[f_s\Delta W_{t_i,s}^{\top}]ds-%
\int_{t_{i}}^{t_{i+1}}\mathbb{E}_{t_i}^x[Z_{s}]ds,\label{FBSDE-err-z}
\end{equation}
where $\Delta W_{t_i,s}= W_{s}-W_{t_i}.$
Similarly, we approximate the integral terms $\mathbb{E}_{t_i}^x[f_s\Delta W_{t_i,s}^{\top}]$ and $\mathbb{E}_{t_i}^x[Z_{s}]$ on the right-hand side of (\ref{FBSDE-err-z}) using the same method as for calculating $Y_{t_i}$, namely
\begin{equation}
\int_{t_{i}}^{t_{i+1}}\mathbb{E}_{t_i}^x[f(s,Y_{s},Z_{s})\Delta W_{t_i,s}^{\top}]ds
= h_t\mathbb{E}_{t_i}^x[\frac{1}{2\alpha}f_{t_{i+1-\alpha }}\Delta W_{i,i+1-\alpha}^{\top}+ (1-\frac{1}{2\alpha})f_{t_{i+1}}\Delta W_{i,i+1}^{\top}]+ R^i_{z,1},\label{FBSDE-err-z-1}
\end{equation}
\begin{equation}
\int_{t_{i}}^{t_{i+1}}\mathbb{E}_{t_i}^x[Z_{s}]ds
 = \frac{h_t}{2}\mathbb{E}_{t_i}^x[Z_{t_i}+ Z_{t_{i+1}}]+ R_{z,2}^i,\label{FBSDE-err-z-2}
\end{equation}
where $\Delta W_{i,i+1-\alpha}= W_{t_{i+1-\alpha}}-W_{t_i}$,
\begin{equation}
\begin{array}
[c]{rl}%
&R_{z,1}^i= \int_{t_{i}}^{t_{i+1}}\mathbb{E}_{t_i}^x[f(s,Y_{s},Z_{s})\Delta W_{t_i,s}^{\top}]ds- h_t\mathbb{E}_{t_i}^x[\frac{1}{2\alpha}f_{t_{i+1-\alpha }}\Delta W_{i,i+1-\alpha}^{\top}+ (1-\frac{1}{2\alpha})f_{t_{i+1}}\Delta W_{i,i+1}^{\top}],\\
&R_{z,2}^i=\int_{t_{i}}^{t_{i+1}}\mathbb{E}_{t_i}^x[Z_{s}]ds
- \frac{h_t}{2}\mathbb{E}_{t_i}^x[Z_{t_i}+ Z_{t_{i+1}}].
\end{array}
\nonumber
\end{equation}
Plugging (\ref{FBSDE-err-z-1}) and (\ref{FBSDE-err-z-2}) into (\ref{FBSDE-err-z}), we deduce
\begin{equation}%
0= \mathbb{E}_{t_i}^x[Y_{t_{i+1}}\Delta W_{i,i+1}^{\top}]
+h_t\mathbb{E}_{t_i}^x[\frac{1}{2\alpha}f_{t_{i+1-\alpha }}\Delta W_{i,i+1-\alpha}^{\top}+ (1-\frac{1}{2\alpha})f_{t_{i+1}}\Delta W_{i,i+1}^{\top}]
-\frac{h_t}{2}\mathbb{E}_{t_i}^x[Z_{t_i}+ Z_{t_{i+1}}]
+R^i_z,\nonumber
\end{equation}
where $R^i_z = R_{z,1}^i - R_{z,2}^i.$ Furthermore,
\begin{equation}%
Z_{t_i} = \mathbb{E}_{t_i}^x[\frac{2}{h_t}Y_{t_{i+1}}\Delta W_{i,i+1}^{\top}]
+2\mathbb{E}_{t_i}^x[\frac{1}{2\alpha}f_{t_{i+1-\alpha }}\Delta W_{i,i+1-\alpha}^{\top}+ (1-\frac{1}{2\alpha})f_{t_{i+1}}\Delta W_{i,i+1}^{\top}]
-\mathbb{E}_{t_i}^x[ Z_{t_{i+1}}]
+\frac{2}{h_t}R^i_z.\label{FBSDE-z-t-i}
\end{equation}
Analogously, we approximate the value of $Z_{t_{i+1-\alpha}}$ in $f_{t_{i+1-\alpha}}$ by the Euler scheme as below
\begin{align}
\alpha h_t Z_{t_{i+1-\alpha}}
=&\mathbb{E}_{t_{i+1-\alpha}}^x\left[Y_{t_{i+1}}\Delta W^\top_{i+1-\alpha,i+1}
+\alpha h_tf_{t_{i+1}}\Delta W^\top_{i+1-\alpha,i+1}\right] +R^i_{z,3}
\label{E-BSDE-t-i-Euler-1}
\end{align}
where  $\Delta W_{i+1-\alpha,i+1} = W_{t_{i+1}} - W_{t_{i+1-\alpha}},$ $\Delta W_{i+1-\alpha,s} = W_s - W_{t_{i+1-\alpha}},$
\begin{equation}
\begin{array}
[c]{rl}%
&R^i_{z,3}=\mathbb{E}_{t_{i+1-\alpha}}^x\left[R^i_{z,31}-R^i_{z,32}\right],\\
&R^i_{z,31}=
\int_{t_{i+1-\alpha}}^{t_{i+1}}f_s\Delta W^\top_{i+1-\alpha,s}ds
-\alpha h_t f_{t_{i+1}}\Delta W^\top_{i+1-\alpha,i+1}, \\
&R^i_{z,32}=\int_{t_{i+1-\alpha}}^{t_{i+1}}Z_{s}ds
-\alpha h_t Z_{t_{i+1-\alpha}}.
\end{array}
\nonumber
\end{equation}
Thus, the computation expression with respect to $Z_{t_{i+1-\alpha}}$ is, 
\begin{align}
 Z^\pi_{i+1-\alpha}
=&\mathbb{E}_{t_{i+1-\alpha}}^x\left[\frac{1}{\alpha h_t}Y^\pi_{i+1}\Delta W^\top_{i+1-\alpha,i+1}+f_{i+1}^\pi\Delta W^\top_{i+1-\alpha,i+1}\right].\label{E-GBSDE-t-i-pi-1}
\end{align}
Combining (\ref{FBSDE-err-y-1}) with (\ref{FBSDE-err-y-2}), (\ref{E-BSDE-t-i-Euler-1}), we have
\begin{equation}%
Y_{t_i}  = \mathbb{E}_{t_i}^x[Y_{t_{i+1}} +  \frac{h_t}{2\alpha}\widetilde{f}_{t_{i+1-\alpha }}+ h_t(1-\frac{1}{2\alpha})f_{t_{i+1}}] + R^i_y,
\label{FBSDE-err-y-3}
\end{equation}
where $\widetilde{f}_{t_{i+1-\alpha }} =f(t_{i+1-\alpha },\widetilde{Y}_{t_{i+1-\alpha }},\widetilde{Z}_{t_{i+1-\alpha }}),$ for $i=0,1,\cdots,N-1$,
\begin{equation}
\begin{array}
[c]{rl}%
&R^i_y=R^i_{yf}+\widehat{R}^i_{yf}, \widehat{R}^i_{yf}= h_t\mathbb{E}_{t_i}^x[f_{t_{i+1-\alpha}}-\widetilde{f}_{t_{i+1-\alpha}}],\\
&\widetilde{Y}_{t_{i+1-\alpha}}=\mathbb{E}_{t_{i+1-\alpha}}^x[Y_{t_{i+1}}+\alpha h_tf_{t_{i+1}}],\\
&\widetilde{Z}_{t_{i+1-\alpha}}=\mathbb{E}_{t_{i+1-\alpha}}^x[\frac{1}{\alpha h_t}Y_{t_{i+1}}\Delta W^\top_{i+1-\alpha,i+1}+f_{t_{i+1}}\Delta W^\top_{i+1-\alpha,i+1}].
\end{array}
\nonumber
\end{equation}
Thus, the time-discretization of $Y_{t_i}$ is, 
\begin{align}
Y_i^\pi
=&\mathbb{E}_{t_i}^x\left[Y^\pi_{i+1}+  \frac{h_t}{2\alpha}\widetilde{f}^\pi_{i+1-\alpha }+ h_t(1-\frac{1}{2\alpha})f^\pi_{i+1}\right],
\label{E-GBSDE-t-i-M-Euler-pi}
\end{align}
where $\widetilde{f}^\pi_{i+1-\alpha}=f(t_{i+1-\alpha},Y^\pi_{i+1-\alpha},Z^\pi_{i+1-\alpha})$
for $i=0,1,\cdots,N-1$.
From (\ref{FBSDE-z-t-i}), we have the time-discretization of $Z_{t_i}$,
\begin{align}
 Z^\pi_i
=\mathbb{E}_{t_i}^x\left[\frac{2}{h_t}Y^\pi_{i+1}\Delta W_{i,i+1}^{\top}
+\frac{1}{\alpha}\widetilde{f}^\pi_{i+1-\alpha }\Delta W_{i,i+1-\alpha}^{\top}+ \frac{2\alpha-1}{\alpha}f^\pi_{i+1}\Delta W_{i,i+1}^{\top}
-Z^\pi_{i+1}\right].
\label{GBSDE-t-i-z-pi}
\end{align}

Thus, we deduce the  discrete-time
approximation $(Y^{\pi},Z^{\pi})$ for $(Y,Z)$ at $t_{i}$ for $i=N,N-1,\cdots,1,0$
\begin{enumerate}
\item
the terminal condition is $(Y_N^\pi,Z_N^\pi) = (\Phi(X_T), D_x u(T,X_{T})^\top\sigma(T,X_{T}))$,
\item for $0 \leq i < N$, the step from $i+1$ to $i$ follows the transition rule described as
\begin{equation}
\left\{
\begin{aligned}
Y^\pi_{i+1-\alpha}
=&\mathbb{E}_{t_{i+1-\alpha}}^x\left[Y^\pi_{i+1}+\alpha h_tf^\pi_{i+1}\right],\\
 Z^\pi_{i+1-\alpha}
=&\mathbb{E}_{t_{i+1-\alpha}}^x\left[\frac{1}{\alpha h_t}Y^\pi_{i+1}\Delta W^\top_{i+1-\alpha,i+1}+f_{i+1}^\pi\Delta W^\top_{i+1-\alpha,i+1}\right],\\
Y_i^\pi
=&\mathbb{E}_{t_i}^x\left[Y^\pi_{i+1}+ \frac{h_t}{2\alpha}\widetilde{f}^\pi_{i+1-\alpha }+ h_t(1-\frac{1}{2\alpha})f^\pi_{i+1}\right],\\
Z^\pi_i
= & \mathbb{E}_{t_i}^x\left[\frac{2}{h_t}Y^\pi_{i+1}\Delta W_{i,i+1}^{\top}
+\frac{1}{\alpha}\widetilde{f}^\pi_{i+1-\alpha }\Delta W_{i,i+1-\alpha}^{\top}+ \frac{2\alpha-1}{\alpha}f^\pi_{i+1}\Delta W_{i,i+1}^{\top}
-Z^\pi_{i+1}\right].
\end{aligned}
\right.  \label{NumSch}%
\end{equation}
\end{enumerate}

\section{Stability analysis}
In this section, we thoroughly analyze stability for the numerical scheme. To simplify notation, we first focus on a one-dimensional scenario. This approach can be easily generalized to higher dimensions. We denote the perturbations affecting the generator \(f\) and the terminal values \(Y_N^\pi\) and \(Z_N^\pi\) by \(\varepsilon_f\), \(\varepsilon_{y,N}^\pi\), and \(\varepsilon_{z,N}^\pi\), respectively. Note that \(\varepsilon_f\) is an \(\mathcal{F}_t\)-adapted process. At any point \((s, Y, Z) \in \left[0, T\right] \times \mathbb{R} \times \mathbb{R}\), we define \(Y_{\varepsilon,N}^\pi\), \(Z_{\varepsilon,N}^\pi\), and \(f_\varepsilon\) as follows:
\begin{equation}
\left\{
\begin{aligned}
Y_{\varepsilon,N}^\pi &= Y_N^\pi + \varepsilon_{y,N}^\pi, \\
Z_{\varepsilon,N}^\pi &= Z_N^\pi + \varepsilon_{z,N}^\pi, \\
f_\varepsilon(s, Y_s, Z_s) &= f(s, Y_s, Z_s) + \varepsilon_f.
\end{aligned}
\right.  \label{ST-1}%
\end{equation}
Furthermore, we define the following quantities: \( \mathit{f}_{\mathit{i}}^{\varepsilon, \pi } \), \( \widetilde{\mathit{f}}_{{\mathit{i + 1 - }\alpha }}^{\varepsilon, \pi } \), \( \varepsilon _{\mathit{f}}^{{\mathit{i}}, \pi } \), \( \widetilde{\varepsilon} _{\mathit{f}}^{{\mathit{i + 1 - }\alpha}, \pi } \), \( \mathit{f}_{\varepsilon, {\mathit{i}}}^\pi \), and \( \widetilde{\mathit{f}}_{\varepsilon, {\mathit{i}}}^\pi \) as follows:
\begin{align}
 \mathit{f}_{\mathit{i}}^{\varepsilon ,\pi } &= \mathit{f}({{\mathit{t}}_{\mathit{i}}}, Y_{\varepsilon ,{\mathit{i}}}^\pi, Z_{\varepsilon ,{\mathit{i}}}^\pi ), \nonumber \\
 \widetilde{\mathit{f}}_{{\mathit{i + 1 - }}\alpha }^{\varepsilon ,\pi } &= \mathit{f}({{\mathit{t}}_{\mathit{i}}}, Y_{\varepsilon,{\mathit{i + 1 - }}\alpha }^\pi, Z_{\varepsilon,{\mathit{i + 1 - }}\alpha }^\pi ), \nonumber \\
 \varepsilon _{\mathit{f}}^{{\mathit{i}},\pi } &= {\varepsilon _{\mathit{f}}}({{\mathit{t}}_{\mathit{i}}}, Y_{\varepsilon,{\mathit{i}}}^\pi, Z_{\varepsilon,{\mathit{i}}}^\pi), \nonumber \\
 \widetilde{\varepsilon} _{\mathit{f}}^{{\mathit{i + 1 - }}\alpha ,\pi } &= {\varepsilon _{\mathit{f}}}({{\mathit{t}}_{{\mathit{i + 1 - }}\alpha }}, Y_{\varepsilon,{\mathit{i + 1 - }}\alpha }^\pi, Z_{\varepsilon,{\mathit{i + 1 - }}\alpha }^\pi), \nonumber \\
 \mathit{f}_{\varepsilon,{\mathit{i}}}^\pi &= \mathit{f}_{\mathit{i}}^{\varepsilon ,\pi } + \varepsilon _{\mathit{f}}^{{\mathit{i}},\pi }, \quad
 \widetilde{\mathit{f}}_{\varepsilon,{\mathit{i}}}^\pi = \widetilde{\mathit{f}}_{{\mathit{i + 1 - }}\alpha }^{\varepsilon ,\pi } + \widetilde{\varepsilon} _{\mathit{f}}^{{\mathit{i + 1 - }}\alpha ,\pi } . \nonumber
\end{align}
Here $Y_{\varepsilon ,{\mathit{i}}}^\pi$, $Z_{\varepsilon ,{\mathit{i}}}^\pi$, $Y_{\varepsilon,{\mathit{i + 1 - \alpha}}}^\pi$, and $Z_{\varepsilon,{\mathit{i + 1 - \alpha}}}^\pi$ are obtained by the \ref{scheme}.

\begin{flushleft}
\refstepcounter{scheme}
\textbf{\thescheme} Given $(Y_{\varepsilon,N}^\pi, Z_{\varepsilon,N}^\pi  ) = (Y_N^\pi + \varepsilon_{y,N}^\pi, Z_N^\pi + \varepsilon_{z,N}^\pi)$, for $i = N - 1, \dots, 0$, solve $(Y_{\varepsilon,i}^\pi, Z_{\varepsilon,i}^\pi  )$ by
\label{scheme}
\end{flushleft}
\begin{equation}
\left\{
\begin{aligned}
Y^\pi_{\varepsilon,{\mathit{i+1-\alpha}}}
=&\mathbb{E}_{t_{i+1-\alpha}}^x\left[Y^\pi_{\varepsilon,i+1}+\alpha h_tf^\pi_{\varepsilon,i+1}\right],\\
 Z^\pi_{\varepsilon,i+1-\alpha}
=&\mathbb{E}_{t_{i+1-\alpha}}^x\left[\frac{1}{\alpha h_t}Y^\pi_{\varepsilon,i+1}\Delta W^\top_{i+1-\alpha,i+1}+f_{\varepsilon,i+1}^\pi\Delta W^\top_{i+1-\alpha,i+1}\right],\\
Y_{\varepsilon,i}^\pi
=&\mathbb{E}_{t_i}^x\left[Y^\pi_{\varepsilon,i+1}+ \frac{h_t}{2\alpha}\widetilde{f}^\pi_{\varepsilon,i+1-\alpha }+ h_t(1-\frac{1}{2\alpha})f^\pi_{\varepsilon,i+1}\right],\\
Z^\pi_{\varepsilon,i}
= & \mathbb{E}_{t_i}^x\left[\frac{2}{h_t}Y^\pi_{\varepsilon,i+1}\Delta W_{i,i+1}^{\top}
+\frac{1}{\alpha}\widetilde{f}^\pi_{\varepsilon,i+1-\alpha }\Delta W_{i,i+1-\alpha}^{\top}+ \frac{2\alpha-1}{\alpha}f^\pi_{\varepsilon,i+1}\Delta W_{i,i+1}^{\top}
-Z^\pi_{\varepsilon,i+1}\right].
\end{aligned}
\right.  \label{ST-2}%
\end{equation}

To enable further analysis, the above scheme can be equivalently written as
\begin{equation}
\left\{
\begin{aligned}
Y^\pi_{\varepsilon,{\mathit{i+1-\alpha}}}
=&\mathbb{E}_{t_{i+1-\alpha}}^x\left[Y^\pi_{\varepsilon,i+1}+\alpha h_tf^{\varepsilon,\pi}_{i+1}\right]+\widetilde{R}^{\pi}_{\varepsilon y,i },\\
 Z^\pi_{\varepsilon,i+1-\alpha}
=&\mathbb{E}_{t_{i+1-\alpha}}^x\left[\frac{1}{\alpha h_t}Y^\pi_{\varepsilon,i+1}\Delta W^\top_{i+1-\alpha,i+1}+f^{\varepsilon,\pi}_{i+1}\Delta W^\top_{i+1-\alpha,i+1}\right]+\widetilde{R}^{\pi}_{\varepsilon z,i },\\
Y_{\varepsilon,i}^\pi
=&\mathbb{E}_{t_i}^x\left[Y^\pi_{\varepsilon,i+1}+ \frac{h_t}{2\alpha}\widetilde{f}^{\varepsilon,\pi}_{i+1-\alpha }+ h_t(1-\frac{1}{2\alpha})f^{\varepsilon,\pi}_{i+1}\right]+{R}^{\pi}_{\varepsilon y,i },\\
Z^\pi_{\varepsilon,i}
= & \mathbb{E}_{t_i}^x\left[\frac{2}{h_t}Y^\pi_{\varepsilon,i+1}\Delta W_{i,i+1}^{\top}
+\frac{1}{\alpha}\widetilde{f}^{\varepsilon,\pi}_{i+1-\alpha }\Delta W_{i,i+1-\alpha}^{\top}+ \frac{2\alpha-1}{\alpha}f^{\varepsilon,\pi}_{i+1}\Delta W_{i,i+1}^{\top}
-Z^\pi_{\varepsilon,i+1}\right]+{R}^{\pi}_{\varepsilon z,i },
\end{aligned}
\right.  \label{ST-3}%
\end{equation}
where
\begin{equation*}
\left\{
\begin{aligned}
\widetilde{R}^{\pi}_{\varepsilon y,i }
=&\mathbb{E}_{t_{i+1-\alpha}}^x\left[\alpha h_t\varepsilon _{\mathit{f}}^{{i+1},\pi }\right],\\
 \widetilde{R}^{\pi}_{\varepsilon z,i }
=&\mathbb{E}_{t_{i+1-\alpha}}^x\left[\varepsilon _{\mathit{f}}^{{i+1},\pi}\Delta W^\top_{i+1-\alpha,i+1}\right],\\
{R}^{\pi}_{\varepsilon y,i }
=&\mathbb{E}_{t_i}^x\left[ \frac{h_t}{2\alpha}\widetilde{\varepsilon} _{\mathit{f}}^{{i+1-\alpha},\pi }+ h_t(1-\frac{1}{2\alpha})\varepsilon_{\mathit{f}}^{{i+1},\pi }\right],\\
{R}^{\pi}_{\varepsilon z,i }
= & \mathbb{E}_{t_i}^x\left[
\frac{1}{\alpha}\widetilde{\varepsilon} _{\mathit{f}}^{{i+1-\alpha},\pi }\Delta W_{i,i+1-\alpha}^{\top}+ \frac{2\alpha-1}{\alpha}\varepsilon_{\mathit{f}}^{{i+1},\pi }\Delta W_{i,i+1}^{\top}\right].
\end{aligned}
\right.  
\end{equation*}

To obtain the error analysis expression, we let $\varepsilon_{y,N}^{\pi}=Y_{\varepsilon,N}^\pi-Y_{N}^\pi$, $\varepsilon_{z,N}^{\pi}=Z_{\varepsilon,N}^\pi-Z_{N}^\pi$. Then subtracting (\ref{NumSch}) from (\ref{ST-3}), we obtain
\begin{equation}
\left\{
\begin{aligned}
\varepsilon_{y,i+1-\alpha}^{\pi}
=&\mathbb{E}_{t_{i+1-\alpha}}^x\left[\varepsilon_{y,i+1}^{\pi}+\alpha h_t(f^{\varepsilon,\pi}_{i+1}-f^{\pi}_{i+1})\right]+\widetilde{R}^{\pi}_{\varepsilon y,i },\\
 \varepsilon_{z,i+1-\alpha}^{\pi}
=&\mathbb{E}_{t_{i+1-\alpha}}^x\left[\frac{1}{\alpha h_t}\varepsilon_{y,i+1}^{\pi}\Delta W^\top_{i+1-\alpha,i+1}+(f^{\varepsilon,\pi}_{i+1}-f^{\pi}_{i+1})\Delta W^\top_{i+1-\alpha,i+1}\right]+\widetilde{R}^{\pi}_{\varepsilon z,i },\\
\varepsilon_{y,i}^{\pi}
=&\mathbb{E}_{t_i}^x\left[\varepsilon_{y,i+1}^{\pi}+ \frac{h_t}{2\alpha}(\widetilde{f}^{\varepsilon,\pi}_{i+1-\alpha }-\widetilde{f}^{\pi}_{i+1-\alpha })+ h_t(1-\frac{1}{2\alpha})(f^{\varepsilon,\pi}_{i+1}-f^{\pi}_{i+1})\right]+{R}^{\pi}_{\varepsilon y,i },\\
\varepsilon_{z,i}^{\pi}
= & \mathbb{E}_{t_i}^x\left[\frac{2}{h_t}\varepsilon_{y,i+1}^{\pi}\Delta W_{i,i+1}^{\top}
+\frac{1}{\alpha}(\widetilde{f}^{\varepsilon,\pi}_{i+1-\alpha }-\widetilde{f}^{\pi}_{i+1-\alpha })\Delta W_{i,i+1-\alpha}^{\top}+ \frac{2\alpha-1}{\alpha}(f^{\varepsilon,\pi}_{i+1}-f^{\pi}_{i+1})\Delta W_{i,i+1}^{\top}
\right]+{R}^{\pi}_{\varepsilon z,i }.
\end{aligned}
\right.  \label{ST-5}%
\end{equation}
We refer to (\ref{ST-5}) as the permutation error equations for scheme (\ref{NumSch}). To prove the stability of scheme (\ref{NumSch}), stability is defined as follows.

\begin{flushleft}
\refstepcounter{definition}
\textbf{\thedefinition} The scheme (\ref{NumSch}) is defined as stable. If, for any $\varepsilon > 0$ and $0 \leq i \leq N-1$, there exists a $\delta > 0$, such that
\[
\mathbb{E}\left[ |\varepsilon_{y,i}^{\pi}|^2 + h_t \sum_{\ell=i}^{N-1} |\varepsilon_{z,\ell}^{\pi}|^2 \right] < \varepsilon,
\]
provided that
\[
\mathbb{E}\left[ |\varepsilon_{\mathit{f}}^{i,\pi}|^2 + |\widetilde{\varepsilon} _{\mathit{f}}^{{i+1-\alpha},\pi }|^2 \right] < \delta,
\mathbb{E}\left[ |\varepsilon_{y,N}^{\pi}|^2 + |\varepsilon_{z,N}^{\pi}|^2 \right] < \delta.
\]
\label{definition}
\end{flushleft}
\begin{theorem}\label{ST-theorem}
Suppose the assumptions (i)-(iii) and $f(t,Y_t,Z_t)\in C_b^3$ hold. Then for a sufficiently small step size $h_t$, we have
\begin{equation}
\begin{aligned}
\mathbb{E}\left[ |\varepsilon_{y,i}^{\pi}|^2 \right]
+ \mathbb{E}\left[ h_t\sum_{\ell=i}^{N-1} |\varepsilon_{z,\ell}^{\pi}|^2 \right]
\le&
C \left(\mathbb{E}[|\varepsilon_{y,N}^{\pi}|^2]
+h_t\mathbb{E}[|\varepsilon_{z,N}^{\pi}|^2] \right)\nonumber\\
& + \sum_{\ell=i}^{N-1}C\mathbb{E}\left[
\left|R^{\pi}_{\varepsilon y,\ell}\right|^2 + (h_t +  h_t^2)\left(\left|\widetilde{R}^{\pi}_{\varepsilon y,\ell}\right|^2 + \left|\widetilde{R}^{\pi}_{\varepsilon z,\ell}\right|^2\right)
+ h_t \left|R^{\pi}_{\varepsilon z,\ell}\right|^2 \right],
\end{aligned}
\end{equation}
where 
$C$ denotes a positive constant.
\end{theorem}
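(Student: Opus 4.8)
The plan is to fold the coupled backward error recursion \eqref{ST-5} into a single discrete Gronwall inequality for the quantity $\mathbb{E}[|\varepsilon_{y,i}^\pi|^2] + h_t\sum_{\ell=i}^{N-1}\mathbb{E}[|\varepsilon_{z,\ell}^\pi|^2]$. The three standard tools are: assumption (iii), used to replace each generator difference by $|f^{\varepsilon,\pi}_{i+1}-f^\pi_{i+1}| \le L(|\varepsilon_{y,i+1}^\pi| + |\varepsilon_{z,i+1}^\pi|)$ and likewise $|\widetilde f^{\varepsilon,\pi}_{i+1-\alpha}-\widetilde f^\pi_{i+1-\alpha}| \le L(|\varepsilon_{y,i+1-\alpha}^\pi| + |\varepsilon_{z,i+1-\alpha}^\pi|)$; Jensen's inequality $|\mathbb{E}_{t_i}^x[\cdot]|^2\le\mathbb{E}_{t_i}^x[|\cdot|^2]$ for the conditional expectations; and Young's inequality $2ab \le \gamma a^2 + \gamma^{-1}b^2$, with $\gamma$ chosen $O(1)$ on the genuine residual $R^\pi_{\varepsilon y,\ell}$ so that it retains an $O(1)$ weight, and $\gamma$ a multiple of $h_t$ on the $\varepsilon$-terms so that they remain Gronwall-compatible.

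First I would record auxiliary bounds for the off-grid errors from the first two lines of \eqref{ST-5}. Squaring, using Jensen and the Lipschitz bounds, gives $\mathbb{E}[|\varepsilon_{y,i+1-\alpha}^\pi|^2] \le C\mathbb{E}[|\varepsilon_{y,i+1}^\pi|^2] + Ch_t^2\mathbb{E}[|\varepsilon_{z,i+1}^\pi|^2] + C\mathbb{E}[|\widetilde R^\pi_{\varepsilon y,i}|^2]$. The $\varepsilon_{z,i+1-\alpha}^\pi$ bound is the first delicate point: the factor $\tfrac{1}{\alpha h_t}$ in front of $\varepsilon_{y,i+1}^\pi\Delta W^\top_{i+1-\alpha,i+1}$ is handled by the zero-mean cancellation $\mathbb{E}_{t_{i+1-\alpha}}^x[\varepsilon_{y,i+1}^\pi\Delta W^\top] = \mathbb{E}_{t_{i+1-\alpha}}^x[(\varepsilon_{y,i+1}^\pi - \mathbb{E}_{t_{i+1-\alpha}}^x[\varepsilon_{y,i+1}^\pi])\Delta W^\top]$, followed by Cauchy--Schwarz and $\mathbb{E}_{t_{i+1-\alpha}}^x[|\Delta W_{i+1-\alpha,i+1}|^2]=\alpha h_t$. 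This leaves a $\tfrac1{h_t}$ prefactor on a conditional variance of $\varepsilon_{y,i+1}^\pi$; crucially, this off-grid $Z$-error re-enters the scheme only through terms already carrying a factor $h_t$ (namely $\tfrac{h_t}{2\alpha}\widetilde f$ in the $Y_i$ line and $\tfrac{1}{\alpha}\widetilde f\,\Delta W_{i,i+1-\alpha}$ in the $Z_i$ line), so after reinsertion the singular factor is absorbed and contributes only $O(h_t)\mathbb{E}[|\varepsilon_{y,i+1}^\pi|^2]$ together with $O(h_t+h_t^2)\mathbb{E}[|\widetilde R^\pi_{\varepsilon z,i}|^2]$.

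Next I would treat the two on-grid lines. From the $Y_i$ line, squaring, Jensen, and the auxiliary bounds yield $\mathbb{E}[|\varepsilon_{y,i}^\pi|^2] \le (1+Ch_t)\mathbb{E}[|\varepsilon_{y,i+1}^\pi|^2] + Ch_t\mathbb{E}[|\varepsilon_{z,i+1}^\pi|^2] + C\mathbb{E}[|R^\pi_{\varepsilon y,i}|^2] + C(h_t+h_t^2)\mathbb{E}[|\widetilde R^\pi_{\varepsilon y,i}|^2 + |\widetilde R^\pi_{\varepsilon z,i}|^2]$. For the $Z_i$ line the leading term $\tfrac{2}{h_t}\varepsilon_{y,i+1}^\pi\Delta W_{i,i+1}^\top$ is again treated by the zero-mean cancellation and Cauchy--Schwarz, so that $h_t\mathbb{E}[|\varepsilon_{z,i}^\pi|^2]$ is bounded by a fixed multiple of $\mathbb{E}[|\varepsilon_{y,i+1}^\pi|^2] - \mathbb{E}[|\mathbb{E}_{t_i}^x[\varepsilon_{y,i+1}^\pi]|^2]$ up to lower-order terms; the retained term $-\mathbb{E}_{t_i}^x[\varepsilon_{z,i+1}^\pi]$ (coming from $-Z^\pi_{i+1}$ in \eqref{NumSch}) produces the $Ch_t\mathbb{E}[|\varepsilon_{z,i+1}^\pi|^2]$ coupling and, at $i=N-1$, the terminal weight $h_t\mathbb{E}[|\varepsilon_{z,N}^\pi|^2]$. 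The key identity is then to use the $Y_i$ line itself to write $\mathbb{E}_{t_i}^x[\varepsilon_{y,i+1}^\pi] = \varepsilon_{y,i}^\pi - R^\pi_{\varepsilon y,i} - G_i$, where $G_i$ collects the $O(h_t)$ generator contributions, so that $\mathbb{E}[|\mathbb{E}_{t_i}^x[\varepsilon_{y,i+1}^\pi]|^2]$ reproduces $\mathbb{E}[|\varepsilon_{y,i}^\pi|^2]$ up to an $O(1)$ contribution of $R^\pi_{\varepsilon y,i}$ and $O(h_t^2)$ contributions of the remaining $\varepsilon$- and $\widetilde R$-terms.

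Finally I would sum the $Z_i$ inequality over $\ell=i,\dots,N-1$: the pairing $\mathbb{E}[|\varepsilon_{y,\ell+1}^\pi|^2] - \mathbb{E}[|\varepsilon_{y,\ell}^\pi|^2]$ telescopes to $\mathbb{E}[|\varepsilon_{y,N}^\pi|^2] - \mathbb{E}[|\varepsilon_{y,i}^\pi|^2]$, isolating the terminal datum and a favorable negative multiple of $\mathbb{E}[|\varepsilon_{y,i}^\pi|^2]$. The coupling $Ch_t\sum_\ell\mathbb{E}[|\varepsilon_{z,\ell+1}^\pi|^2]$ is shifted onto the left-hand side and absorbed for $h_t$ small, while $h_t\sum_\ell\mathbb{E}[|\varepsilon_{y,\ell}^\pi|^2]$ is controlled by the discrete Gronwall lemma applied to the $Y_i$ recursion (which itself leaves only $C\,h_t\sum_\ell\mathbb{E}[|\varepsilon_{z,\ell}^\pi|^2]$ to feed back). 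Choosing $h_t$ small enough that both the $\varepsilon_z$ self-coupling coefficient and the Gronwall factor $(1+Ch_t)^N \le e^{CT}$ are controlled closes the estimate, giving the residual weights $C$ on $R^\pi_{\varepsilon y}$, $h_t$ on $R^\pi_{\varepsilon z}$, and $(h_t+h_t^2)$ on $\widetilde R^\pi_{\varepsilon y},\widetilde R^\pi_{\varepsilon z}$. I expect the main obstacle to be precisely this coupling-and-telescoping bookkeeping: managing the $\tfrac1{h_t}$ and $\tfrac2{h_t}$ singular factors so that, via the zero-mean property of the Brownian increments, they convert into conditional variances that telescope against the $Y$-recursion instead of blowing up, and coordinating the Young parameters so that each residual emerges with exactly the weight claimed.
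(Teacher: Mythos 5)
Your route is genuinely different from the paper's. The paper never centers the Brownian increments: it applies plain Cauchy--Schwarz to $\mathbb{E}_{t_i}^x[\varepsilon_{y,i+1}^{\pi}\,2\Delta W_{i,i+1}^{\top}/h_t]$, squares every line with the crude bound $(\sum_{k=1}^5 a_k)^2\le 5\sum_k a_k^2$, and ends up in (\ref{ST-15}) with the leading coefficient $40(1+C_1h_t)$ on $\mathbb{E}_{t_i}^x[|\varepsilon_{y,i+1}^{\pi}|^2]$, which it then recasts by fiat into the Gronwall-compatible form (\ref{ST-16})--(\ref{ST-17}). Your centering identity $\mathbb{E}_{t_i}^x[\varepsilon_{y,i+1}^{\pi}\Delta W^{\top}]=\mathbb{E}_{t_i}^x[(\varepsilon_{y,i+1}^{\pi}-\mathbb{E}_{t_i}^x[\varepsilon_{y,i+1}^{\pi}])\Delta W^{\top}]$, followed by the substitution of the $Y$-line to telescope the conditional variances, is the standard Bouchard--Touzi-type argument; it is sharper where the paper is loosest, and it is the only way to keep the $Y$-recursion coefficient at $1+O(h_t)$ honestly. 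The trade-off is in the residual weights: your telescoping and your Young parameters naturally put a weight $O(1/h_t)$ on $|R^{\pi}_{\varepsilon y,\ell}|^2$ (or else, with $\gamma=O(1)$ as you propose, inflate the main coefficient to a constant strictly larger than $1$, which destroys Gronwall), whereas the paper's cruder squaring is precisely what produces the $O(1)$ weight stated in the theorem. Since $R^{\pi}_{\varepsilon y,\ell}$ is itself explicitly $O(h_t)$ times the perturbation $\varepsilon_f$, either weight suffices for stability in the sense of Definition 3.1, but as written your bookkeeping does not deliver the exact constants claimed.

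There is one step in your plan that would fail as stated. You claim the retained term $-\mathbb{E}_{t_i}^x[\varepsilon_{z,i+1}^{\pi}]$ in the $Z$-line ``produces the $Ch_t\mathbb{E}[|\varepsilon_{z,i+1}^{\pi}|^2]$ coupling'' which is then ``shifted onto the left-hand side and absorbed for $h_t$ small.'' But this term enters $|\varepsilon_{z,i}^{\pi}|$ with coefficient exactly $1$, so after squaring and multiplying by $h_t$ it contributes at least $(1+\gamma)h_t\,\mathbb{E}[|\varepsilon_{z,i+1}^{\pi}|^2]$ --- the same order as the terms $h_t\mathbb{E}[|\varepsilon_{z,\ell}^{\pi}|^2]$ you are trying to retain on the left. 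Summing over $\ell$ then yields a left-hand coefficient of the form $1-(1+\gamma)\le 0$ on $h_t\sum_{\ell}\mathbb{E}[|\varepsilon_{z,\ell}^{\pi}|^2]$, and no choice of small $h_t$ rescues this: it is the borderline trapezoidal (Crank--Nicolson) coupling, for which the contraction factor on $\varepsilon_{z,i+1}^{\pi}$ is exactly one rather than strictly less than one. The paper's own proof stumbles at the very same place --- in (\ref{ST-17}) the factor $(1-C_2)$ is negative since $C_2\ge 5$ --- so this is a defect of the argument for the $Z$-part of the left-hand side that neither you nor the paper resolves; recovering $h_t\sum_{\ell}\mathbb{E}[|\varepsilon_{z,\ell}^{\pi}|^2]$ requires an additional idea (for instance exploiting the sign structure of the $-\mathbb{E}_{t_i}^x[\varepsilon_{z,i+1}^{\pi}]$ term rather than bounding it in absolute value). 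For the $Y$-part alone, your argument goes through and is cleaner than the paper's.
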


\begin{proof}
From (\ref{ST-5}) and the Lipschitz-condition assumption, we have
\begin{equation}
\begin{aligned}
\left|\varepsilon_{z,i}^{\pi}\right|
\leq & \ \left|\mathbb{E}_{t_i}^x\left[\varepsilon_{y,i+1}^{\pi} \frac{2 \Delta W_{i,i + 1}^\top}{h_t} \right] \right|
+ \frac{L}{\alpha} \mathbb{E}_{t_i}^x\left[ \left( |\varepsilon_{y,i+1-\alpha}^{\pi}| + |\varepsilon_{z,i+1-\alpha}^{\pi}| \right) |\Delta W_{i,i + 1 - \alpha}^\top| \right] \\
& + \left| \frac{2\alpha - 1}{\alpha} \right| L \mathbb{E}_{t_i}^x\left[\left( |\varepsilon_{y,i+1}^{\pi}| + |\varepsilon_{z,i+1}^{\pi}| \right) |\Delta W_{i,i + 1 - \alpha}^\top|\right] + \mathbb{E}_{t_i}^x\left[|\varepsilon_{z,i+1}^{\pi}| \right] + \left| R_{\varepsilon z,i}^{\pi} \right|.
\end{aligned}
\label{ST-6}
\end{equation}
From the Cauchy-Schwarz inequality, the inequality (\ref{ST-6}) is represented as
\begin{equation}
\begin{aligned}
\left|\varepsilon_{z,i}^{\pi}\right|
\leq & \ \left| \frac{2}{h_t} \sqrt{h_t \mathbb{E}_{t_i}^x[|\varepsilon_{y,i+1}^{\pi}|^2]} \right|
+ \frac{L}{\alpha} \left( \sqrt{(1-\alpha)h_t \mathbb{E}_{t_i}^x[|\varepsilon_{y,i + 1 - \alpha}^{\pi}|^2]} + \sqrt{(1-\alpha)h_t \mathbb{E}_{t_i}^x[|\varepsilon_{z,i + 1 - \alpha}^{\pi}|^2]} \right)  \\
& + \left| \frac{2\alpha - 1}{\alpha} \right| L \left( \sqrt{h_t \mathbb{E}_{t_i}^x[|\varepsilon_{y,i + 1}^{\pi}|^2]} + \sqrt{h_t \mathbb{E}_{t_i}^x[|\varepsilon_{z,i + 1}^{\pi}|^2]} \right)   + \mathbb{E}_{t_i}^x\left[|\varepsilon_{z,i+1}^{\pi}| \right] + \left| R_{\varepsilon z,i}^{\pi} \right|.
\end{aligned}
\label{ST-7}
\end{equation}
From (\ref{ST-5}), we have
\begin{equation}
\begin{aligned}
|\varepsilon_{y,i+1-\alpha}^{\pi}|^2
=&\left|\mathbb{E}_{t_{i+1-\alpha}}^x\left[\varepsilon_{y,i+1}^{\pi}+\alpha h_t(f^{\varepsilon,\pi}_{i+1}-f^{\pi}_{i+1})\right]+\widetilde{R}^{\pi}_{\varepsilon y,i }\right|^2\\
\le&\left|\mathbb{E}_{t_{i+1-\alpha}}^x\left[\varepsilon_{y,i+1}^{\pi}+\alpha h_t L(|\varepsilon_{y,i+1}^{\pi}| + |\varepsilon_{z,i+1}^{\pi}|)\right]+\widetilde{R}^{\pi}_{\varepsilon y,i }\right|^2,
\end{aligned}
\label{ST-8}
\end{equation}
and
\begin{equation}
\begin{aligned}
|\varepsilon_{z,i+1-\alpha}^{\pi}|^2
=&\left|\mathbb{E}_{t_{i+1-\alpha}}^x\left[\frac{1}{\alpha h_t}|\varepsilon_{y,i+1}^{\pi}|\Delta W^\top_{i+1-\alpha,i+1}+(f^{\varepsilon,\pi}_{i+1}-f^{\pi}_{i+1})\Delta W^\top_{i+1-\alpha,i+1}\right]+\widetilde{R}^{\pi}_{\varepsilon z,i }\right|^2\\
\le& \left|\mathbb{E}_{t_{i + 1 - \alpha}}^x\left[\frac{1}{\alpha h_t}|\varepsilon_{y,i+1}^{\pi}|\Delta W^\top_{i + 1 - \alpha,i+1}
+L\left(|\varepsilon_{y,i + 1}^{\pi}| + |\varepsilon_{z,i+1}^{\pi}|\right)\Delta W^\top_{i + 1 - \alpha,i+1}\right] +\widetilde{R}^{\pi}_{\varepsilon z,i }\right|^2\\
\le& 2\mathbb{E}_{t_{i + 1 - \alpha}}^x\left[\left(\frac{2}{\alpha h_t}+2\alpha h_tL^2+4L\right)|\varepsilon_{y,i+1}^{\pi}|^2+2\alpha h_tL^2|\varepsilon_{z,i+1}^{\pi}|^2\right] +2\left|\widetilde{R}^{\pi}_{\varepsilon z,i }\right|^2.
\end{aligned}
\label{ST-9}
\end{equation}
Plugging (\ref{ST-8}) and (\ref{ST-9}) into (\ref{ST-7}), we obtain
\begin{align}
\left|\varepsilon_{z,i}^{\pi}\right|
\leq & \ \left| \frac{2}{h_t} \sqrt{h_t \mathbb{E}_{t_i}^x[|\varepsilon_{y,i+1}^{\pi}|^2]} \right|
+ \frac{L}{\alpha} \left( \sqrt{2(1-\alpha)h_t \mathbb{E}_{t_i}^x\left[\left( (1 + \alpha h_t L) |\varepsilon_{y,i + 1}^{\pi}| + \alpha h_tL|\varepsilon_{z,i + 1}^{\pi}| \right)^2 + (\widetilde{R}^{\pi}_{\varepsilon y,i })^2 \right]} \right) \nonumber\\
& + \frac{L}{\alpha} \sqrt{2(1-\alpha)h_t \mathbb{E}_{t_i}^x\left[\left( \frac{2}{\alpha h_t} + 2\alpha h_t L^2 + 4L \right) |\varepsilon_{y,i + 1}^{\pi}|^2 + 2\alpha h_t L^2 |\varepsilon_{z,i + 1}^{\pi}|^2 + |\widetilde{R}^{\pi}_{\varepsilon z,i }|^2 \right]} \nonumber\\
& + \left| \frac{2\alpha - 1}{\alpha} \right| L \left( \sqrt{h_t \mathbb{E}_{t_i}^x[|\varepsilon_{y,i + 1}^{\pi}|^2]} + \sqrt{h_t \mathbb{E}_{t_i}^x[|\varepsilon_{z,i + 1}^{\pi}|^2]} \right)  + \mathbb{E}_{t_i}^x[|\varepsilon_{z,i + 1}^{\pi}|] + \left|  R_{\varepsilon z,i}^{\pi} \right|.
\label{ST-10}
\end{align}
Squaring (\ref{ST-10}), we deduce
\begin{align}
\left|\varepsilon_{z,i}^{\pi}\right|^2
\le & \, \left(\frac{20}{h_t} + \frac{10(1 - \alpha)h_t L^2}{\alpha^2}
\left(4 + 4\alpha^2 h_t L^2 + 8\alpha h_t L + \frac{2}{\alpha h_t} + 2\alpha h_t L^2 + 4L\right) \right. \nonumber \\
& \left. + \frac{10{(2\alpha - 1)}^2 h_t L^2}{\alpha^2}\right)\mathbb{E}_{t_i}^x[|\varepsilon_{y,i+1}^{\pi}|^2] \nonumber \\
& + \left(\frac{10(1 - \alpha)h_t L^2}{\alpha^2}(4\alpha^2 h_t^2 L^2 + 2\alpha h_t L^2)
+ \frac{10{(2\alpha - 1)}^2 h_t L^2}{\alpha^2} + 5 \right)\mathbb{E}_{t_i}^x[|\varepsilon_{z,i+1}^{\pi}|^2] \nonumber \\
& + \frac{20(1 - \alpha)h_t L^2}{\alpha^2}\left(|\widetilde{R}^{\pi}_{\varepsilon z,i }|^2 + |\widetilde{R}^{\pi}_{\varepsilon y,i }|^2\right)
+ 5\left|R_{\varepsilon z,i}^{\pi}\right|^2.
\label{ST-11}
\end{align}
From (\ref{ST-5}), we preliminarily obtain
\begin{align}
\left|\varepsilon_{y,i}^{\pi}\right|
\le & \left| \mathbb{E}_{t_i}^x[\varepsilon_{y,i+1}^{\pi}] \right|
+ \left| \frac{h_t}{2\alpha} \mathbb{E}_{t_i}^x[\widetilde{f}^{\varepsilon,\pi}_{i+1-\alpha }-\widetilde{f}^{\pi}_{i+1-\alpha }] \right|
+ \left| \frac{h_t(2\alpha - 1)}{2\alpha} \mathbb{E}_{t_i}^x[f^{\varepsilon,\pi}_{i+1}-f^{\pi}_{i+1}] \right|
+ \left| {R}^{\pi}_{\varepsilon y,i } \right| \nonumber \\
\le & \left| \mathbb{E}_{t_i}^x[\varepsilon_{y,i+1}^{\pi}] \right|
+ \frac{h_t}{2\alpha} L \mathbb{E}_{t_i}^x \left[ |\varepsilon_{y,i+1-\alpha}^{\pi}| + |\varepsilon_{z,i+1-\alpha}^{\pi}| \right]  + \left|\frac{2\alpha - 1}{2\alpha}\right| h_t L \mathbb{E}_{t_i}^x \left[ \left| \varepsilon_{y,i+1}^{\pi} \right| + \left| \varepsilon_{z,i+1}^{\pi} \right| \right]
+ \left| {R}^{\pi}_{\varepsilon y,i } \right|.
\label{ST-12}
\end{align}
Combining (\ref{ST-5}) and (\ref{ST-12}), we can further derive
\begin{align}
\left|\varepsilon_{y,i}^{\pi}\right|
\le & \left(1 + \frac{h_t L}{2\alpha} (1 + \alpha h_t L) + \left| \frac{2\alpha - 1}{\alpha} \right| h_t L\right) \mathbb{E}_{t_i}^x \left[\left|\varepsilon_{y,i+1}^{\pi}\right|\right] \nonumber \\
& + \left( \frac{h_t^2 L^2}{\alpha^2} + \left| \frac{2\alpha - 1}{\alpha} \right| h_t L \right) \mathbb{E}_{t_i}^x \left[\left|\varepsilon_{z,i+1}^{\pi}\right|\right]
+ \left( \frac{L}{2\alpha^2} + \frac{h_t L^2}{2\alpha} \right) \mathbb{E}_{t_i}^x \left[\left|\varepsilon_{y,i+1}^{\pi} \Delta W^\top_{i + 1 - \alpha, i + 1}\right|\right] \nonumber \\
& + \frac{h_t L^2}{2\alpha} \mathbb{E}_{t_i}^x \left[\left|\varepsilon_{z,i+1}^{\pi} \Delta W^\top_{i + 1 - \alpha, i + 1}\right|\right]
+ \left|R^{\pi}_{\varepsilon y,i}\right|+\frac{h_t L}{2\alpha}\left|\widetilde{R}^{\pi}_{\varepsilon y,i}\right|+\frac{h_t L}{2\alpha}\left|\widetilde{R}^{\pi}_{\varepsilon z,i}\right|.
\label{ST-13}
\end{align}
Squaring (\ref{ST-13}), we deduce
\begin{align}
\left|\varepsilon_{y,i}^{\pi}\right|^2
\le& \, 5 \left( 4 + \frac{h_t^2 L^2}{\alpha^2} + h_t^4 L^4 + \frac{4(2\alpha - 1)^2}{\alpha^2} h_t^2 L^2 + \frac{h_t L^2}{2 \alpha^3} + \frac{h_t^3 L^4}{2 \alpha} \right) \mathbb{E}_{t_i}^x \left[|\varepsilon_{y,i+1}^{\pi}|^2\right] \nonumber \\
&+ 5 \left( \frac{h_t^4 L^4}{2} + \frac{2(2\alpha - 1)^2}{\alpha^2} h_t^2 L^2 + \frac{h_t^3 L^4}{4 \alpha} \right) \mathbb{E}_{t_i}^x \left[|\varepsilon_{z,i+1}^{\pi}|^2\right]\nonumber \\
&+ 15\left|R^{\pi}_{\varepsilon y,i}\right|^2+\frac{15h_t L}{2\alpha}\left|\widetilde{R}^{\pi}_{\varepsilon y,i}\right|^2+\frac{15h_t L}{2\alpha}\left|\widetilde{R}^{\pi}_{\varepsilon z,i}\right|^2.
\label{ST-14}
\end{align}
Adding (\ref{ST-11}) to (\ref{ST-14}), we deduce
\begin{align}
&\left|\varepsilon_{y,i}^{\pi}\right|^2 + h_t\left|\varepsilon_{z,i}^{\pi}\right|^2 \nonumber\\
\le& \, 5 \bigg( 8 + \frac{4(1 - \alpha)}{\alpha^3} L^2 h_t + \frac{6(2\alpha - 1)^2 + 1 + 8(1 - \alpha) + 8L(1 - \alpha)}{\alpha^2} L^2 h_t^2 \nonumber\\
& \quad + \frac{L + 32(1 - \alpha) + 8L(1 - \alpha)}{2\alpha} L^3 h_t^3 + (1 + 8(1 - \alpha)) L^4 h_t^4 \bigg) \mathbb{E}_{t_i}^x \left[|\varepsilon_{y,i+1}^{\pi}|^2\right] \nonumber\\
& + 5 \left( h_t + \frac{4(2\alpha - 1)^2}{\alpha^2} L^2 h_t^2 + \frac{16(1 - \alpha) + 1}{4\alpha} L^4 h_t^3 + \frac{16(1 - \alpha) + 1}{2} L^4 h_t^4 \right) \mathbb{E}_{t_i}^x \left[|\varepsilon_{z,i+1}^{\pi}|^2 \right] \nonumber\\
& +15\left|R^{\pi}_{\varepsilon y,i}\right|^2+\frac{15h_t L}{2\alpha}\left|\widetilde{R}^{\pi}_{\varepsilon y,i}\right|^2+\frac{15h_t L}{2\alpha}\left|\widetilde{R}^{\pi}_{\varepsilon z,i}\right|^2 + \frac{20(1 - \alpha)h_t^2 L^2}{\alpha^2}\left(|\widetilde{R}^{\pi}_{\varepsilon z,i }|^2 + |\widetilde{R}^{\pi}_{\varepsilon y,i }|^2\right)
+ 5h_t\left|R_{\varepsilon z,i}^{\pi}\right|^2 \nonumber\\
=& \, 40(1 + C_1 h_t) \mathbb{E}_{t_i}^x \left[|\varepsilon_{y,i+1}^{\pi}|^2\right] + C_2 h_t \mathbb{E}_{t_i}^x \left[|\varepsilon_{z,i+1}^{\pi}|^2 \right] \nonumber\\
& +15\left|R^{\pi}_{\varepsilon y,i}\right|^2+\left(\frac{15h_t L}{2\alpha} + \frac{20(1 - \alpha)h_t^2 L^2}{\alpha^2}\right)\left(|\widetilde{R}^{\pi}_{\varepsilon z,i }|^2 + |\widetilde{R}^{\pi}_{\varepsilon y,i }|^2\right)
+ 5h_t\left|R_{\varepsilon z,i}^{\pi}\right|^2,
\label{ST-15}
\end{align}
where $C_1 = \frac{1}{8}(\frac{{4(1 - \alpha )}}{{{\alpha ^3}}}{L^2} + \frac{{6{{(2\alpha  - 1)}^2} + 1 + 8(1 - \alpha ) + 8L(1 - \alpha )}}{{{\alpha ^2}}}{L^2}{h_t} + \frac{{L + 32(1 - \alpha ) + 8L(1 - \alpha )}}{{2\alpha }}{L^3}{h_t}^2 + (1 + 8(1 - \alpha )){L^4}{h_t}^3)$, $C_2 =5(1 + \frac{{4{{(2\alpha  - 1)}^2}}}{{{\alpha ^2}}}{L^2}{h_t} + \frac{{16(1 - \alpha ) + 1}}{{4\alpha }}{L^4}{h_t}^2 + \frac{{16(1 - \alpha ) + 1}}{2}{L^4}{h_t}^3).$
Two positive constants, denoted as $C_0$ and $C_3$, exist such that
\begin{align}
&C_0\left|\varepsilon_{y,i}^{\pi}\right|^2
+ h_t\left|\varepsilon_{z,i}^{\pi}\right|^2\nonumber\\
\le&
(1+C_3h_t)\left(C_0\mathbb{E}_{t_i}^x[|\varepsilon_{y,i+1}^{\pi}|^2]
+C_2h_t\mathbb{E}_{t_i}^x[|\varepsilon_{z,i+1}^{\pi}|^2 ]\right)\nonumber\\
& +C_3\left|R^{\pi}_{\varepsilon y,i}\right|^2+(C_3 h_t+C_3 h_t^2)\left(|\widetilde{R}^{\pi}_{\varepsilon z,i }|^2 + |\widetilde{R}^{\pi}_{\varepsilon y,i }|^2\right)
+ C_3 h_t\left|R_{\varepsilon z,i}^{\pi}\right|^2.
\label{ST-16}
\end{align}
By applying the tower property of conditional expectations to (\ref{ST-16}), it follows that
\begin{align}
&C_0 \mathbb{E}\left[\left|\varepsilon_{y,i}^{\pi}\right|^2\right]
+ (1 - C_2)h_t \sum_{\ell=i}^{N-1} (1 + C_3 h_t)^{\ell-i} \mathbb{E}\left[\left|\varepsilon_{z,\ell}^{\pi}\right|^2\right] \nonumber \\
\le & (1 + C_3 h_t)^{N-i} \left(C_0 \mathbb{E}\left[|\varepsilon_{y,N}^{\pi}|^2\right]
+ C_2 h_t \mathbb{E}\left[|\varepsilon_{z,N}^{\pi}|^2\right]\right) \nonumber \\
& + \sum_{\ell=i}^{N-1} (1 + C_3 h_t)^{\ell-i} \mathbb{E}\left[
C_3 \left|R^{\pi}_{\varepsilon y,\ell}\right|^2
+ \left(C_3 h_t+C_3 h_t^2 \right) \left(\left|\widetilde{R}^{\pi}_{\varepsilon z,\ell}\right|^2 + \left|\widetilde{R}^{\pi}_{\varepsilon y,\ell}\right|^2\right)
+ C_3 h_t \left|R^{\pi}_{\varepsilon z,\ell}\right|^2\right].
\label{ST-17}
\end{align}
Furthermore, we have
\begin{align}
\mathbb{E}\left[ |\varepsilon_{y,i}^{\pi}|^2 \right]
+ \mathbb{E}\left[ h_t\sum_{\ell=i}^{N-1} |\varepsilon_{z,\ell}^{\pi}|^2 \right]
\le&
C \left(\mathbb{E}[|\varepsilon_{y,N}^{\pi}|^2]
+h_t\mathbb{E}[|\varepsilon_{z,N}^{\pi}|^2] \right)\nonumber\\
& + \sum_{\ell=i}^{N-1}C\mathbb{E}\left[
\left|R^{\pi}_{\varepsilon y,\ell}\right|^2 + (h_t +  h_t^2)\left(\left|\widetilde{R}^{\pi}_{\varepsilon y,\ell}\right|^2 + \left|\widetilde{R}^{\pi}_{\varepsilon z,\ell}\right|^2\right)
+ h_t \left|R^{\pi}_{\varepsilon z,\ell}\right|^2 \right].
\label{ST-18}
\end{align}
The proof is completed.
\end{proof}

\begin{flushleft}
\refstepcounter{remark}
\textbf{\theremark} 
In stability theory, the impact of small perturbations to the terminal conditions and the generator of BSDE is analyzed to assess the stability of the numerical scheme under these disturbances. Specifically, perturbations $(\varepsilon_{y,N}^\pi, \varepsilon_{z,N}^\pi)$ are applied to the terminal values $(Y_N^\pi, Z_N^\pi)$, and a perturbation $\varepsilon_f$ is introduced to the generator $f(s, Y_s, Z_s)$. This stability analysis confirms the robustness of the scheme described in (\ref{NumSch}) for practical applications.

\end{flushleft}

\section{Error estimates}
In this section, prior to presenting the error estimates for the scheme (\ref{NumSch}), we first introduce a key proposition and three lemmas. To facilitate understanding, we define certain symbols before discussing the It\^{o}-Taylor expansions as outlined in Theorem 5.5.1 of \cite{KPEPE92}. Let \(\alpha := (\alpha_1, \cdots, \alpha_p)\) be a multi-index of finite length, where \(p \in \mathbb{N}^+\).
Define \(\ell(\alpha)\) as the length of the multi-index \(\alpha\). The set \(\mathcal{A}^{\alpha}\) includes all functions \(v: [0, T] \times \mathbb{R}^d \to \mathbb{R}^{d_1}\) such that \(\mathcal{L}^\alpha v\) is continuous and well-defined. Here, \(\mathcal{L}^\alpha = \mathcal{L}^{(\alpha_1)} \circ \cdots \circ \mathcal{L}^{(\alpha_p)}\), and \(\mathcal{L}^{(\alpha_\ell)}\) is expressed as \(\sum_{k=1}^d \sigma_{kj} \partial_{x_k}\) for \(\ell = 1, 2, \cdots, p\). The subset \(\mathcal{A}^{\alpha}_{b}\) includes functions in \(\mathcal{A}^{\alpha}\) where \(\mathcal{L}^\alpha v\) is bounded. For a positive integer \(m\), \(\mathcal{A}^{m}_{b}\) denotes the set of functions \(v\) such that \(v^\alpha \in \mathcal{A}^{\alpha}_{b}\) for all \(\alpha \in \{\alpha \mid \ell(\alpha) \leq m\} \setminus \{\oslash\}\).

\begin{proposition}
(see \cite{CJF14,KPEPE92,ZCWJZW19})
Let $m\ge0$. Then for a function $v\in\mathcal{A}_{b}^{m+1},$
\begin{equation}\label{Ito-Taylor-expansions}
\mathbb{E}_{t}^x[v(t+h_t,X_{t+h_t})]=v_{t}+h_tv_{t}^{(0)}+\frac{h_t^{2}}{2}v_{t}%
^{(0,0)}+\cdots+\frac{h_t^{m}}{m!}v_{t}^{(0)_{m}}+O(h_t^{m+1}),
\end{equation}
\begin{equation}\label{Ito-Taylor-expansions-dw}
\mathbb{E}_{t}^x[(W_{t+h_t}-W_t)v(t+h_t,X_{t+h_t})]=h_tv_{t}^{(1)}+h_t^2v_{t}^{(1,0)}+\frac{h_t^{3}}{2}v_{t}%
^{(1)*(0,0)}+\cdots+\frac{h_t^{m+1}}{m!}v_{t}^{(1)*(0)_{m}}+O(h_t^{m+2}),
\end{equation}
where
$v_{t}=v(t,X_t),$
$v_t^{(0)}=\mathcal{L}^{(0)}v(t,X_t),v_t^{(0,0)}=\mathcal{L}^{(0)}\circ\mathcal{L}^{(0)}v(t,X_t),\cdots,$
$v_t^{(0)_m}=\begin{matrix} \underbrace{\mathcal{L}^{(0)}\circ\cdots\circ\mathcal{L}^{(0)}}_{m} \end{matrix}v(t,X_t);$
$v_t^{(1)}=\mathcal{L}^{(1)}v(t,X_t),v_t^{(1,0)}=\mathcal{L}^{(1)}\circ\mathcal{L}^{(0)}v(t,X_t),\cdots,$
$v_t^{(1)*(0)_m}=\begin{matrix}\mathcal{L}^{(1)}\circ\underbrace{\mathcal{L}^{(0)}\circ\cdots\circ\mathcal{L}^{(0)}}_{m} \end{matrix}v(t,X_t)$.

\end{proposition}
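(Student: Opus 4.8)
The plan is to derive both identities from the stochastic It\^{o}--Taylor expansion (Theorem 5.5.1 of \cite{KPEPE92}) applied to the (SDE) flow in (\ref{dFBSDE}) started from $X_t=x$, and then to read off the deterministic coefficients by taking the conditional expectation $\mathbb{E}_t^x[\cdot]$. The starting point is It\^{o}'s formula, which for $v\in\mathcal{A}_b^{1}$ gives
\[
v(t+s,X_{t+s})=v_t+\int_0^s \mathcal{L}^{(0)}v(t+r,X_{t+r})\,dr+\int_0^s \mathcal{L}^{(1)}v(t+r,X_{t+r})\,dW_{t+r},
\]
so that $\mathcal{L}^{(0)}$ produces the finite-variation part and $\mathcal{L}^{(1)}$ the martingale part. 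Substituting It\^{o}'s formula into its own integrands and iterating yields
\[
v(t+h_t,X_{t+h_t})=\sum_{\ell(\alpha)\le m}\mathcal{L}^{\alpha}v(t,X_t)\,I_{\alpha}+\mathcal{R}_m,
\]
where $I_{\alpha}$ is the iterated It\^{o} integral attached to the multi-index $\alpha$ (the entry $0$ marking a $dr$-integration, the entry $1$ a $dW$-integration), $\mathcal{L}^{\alpha}=\mathcal{L}^{(\alpha_1)}\circ\cdots\circ\mathcal{L}^{(\alpha_p)}$, and $\mathcal{R}_m$ collects the length-$(m+1)$ iterated integrals whose integrands are not frozen at $(t,X_t)$.

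For (\ref{Ito-Taylor-expansions}) I would apply $\mathbb{E}_t^x[\cdot]$ to this expansion. By the martingale property of the It\^{o} integral (and an induction along the outer $dr$-integrations), $\mathbb{E}_t^x[I_{\alpha}]=0$ for every multi-index $\alpha$ containing an index equal to $1$, so only the pure-time indices $\alpha=(0)_k$, $k=0,1,\dots,m$, survive; for these $\mathbb{E}_t^x[I_{(0)_k}]=h_t^k/k!$ and $\mathcal{L}^{(0)_k}v=v_t^{(0)_k}$, which reproduces the first series. The remainder is controlled by noting that $\mathcal{R}_m$ is a sum of length-$(m+1)$ iterated integrals with coefficient functions $\mathcal{L}^{\alpha}v$, $\ell(\alpha)=m+1$; since $v\in\mathcal{A}_b^{m+1}$ these are uniformly bounded, and the standard $L^2$ estimate for iterated integrals yields $\mathbb{E}_t^x[\mathcal{R}_m]=O(h_t^{m+1})$.

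For (\ref{Ito-Taylor-expansions-dw}) I would instead multiply the expansion by $\Delta W:=W_{t+h_t}-W_t=I_{(1)}[1]$ before taking $\mathbb{E}_t^x[\cdot]$. The role of the martingale property is now played by the It\^{o} isometry (equivalently, the multiplication rule for iterated Wiener integrals): $\mathbb{E}_t^x[\Delta W\,I_{\alpha}]$ is nonzero only when the single increment $\Delta W$ pairs with a $dW$-factor carried by $I_{\alpha}$. Evaluating these pairings by the isometry, beginning with $\mathbb{E}_t^x[\Delta W\,I_{(1)}]=h_t$ (which already gives the leading term $h_t v_t^{(1)}$), reproduces the prefactors $h_t,\,h_t^2,\,\tfrac{h_t^3}{2},\dots,\tfrac{h_t^{m+1}}{m!}$ of (\ref{Ito-Taylor-expansions-dw}).

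The hard part will be the remainder and the bookkeeping for this second identity. One cannot close the error by a bare Cauchy--Schwarz split $\mathbb{E}_t^x[|\Delta W\,\mathcal{R}|]\le(\mathbb{E}_t^x[|\Delta W|^2])^{1/2}(\mathbb{E}_t^x[|\mathcal{R}|^2])^{1/2}$, because the factor $\Delta W$ improves the order by a full power of $h_t$ only when it genuinely pairs with a stochastic factor; one must therefore carry the expansion to iterated integrals of length $m+1$ and estimate $\mathbb{E}_t^x[\Delta W\,\mathcal{R}]$ through the same pairing mechanism to reach the sharp $O(h_t^{m+2})$. One must also track, for each surviving $\alpha$, the correct factorial weight and the order of composition in $\mathcal{L}^{\alpha}$. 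In both identities the hypothesis $v\in\mathcal{A}_b^{m+1}$ is exactly what guarantees that all coefficient functions up to the truncation order are well defined and bounded, so that the remainder estimates hold uniformly in $x$.
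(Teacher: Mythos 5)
The paper does not actually prove this proposition --- it is imported verbatim from \cite{CJF14,KPEPE92,ZCWJZW19} with a bare citation --- so your iterated It\^{o}--Taylor skeleton (expand, then kill or pair the stochastic integrals under $\mathbb{E}_t^x$) is the standard route from those sources and is the right one. However, there are two concrete gaps. For (\ref{Ito-Taylor-expansions}) you close the remainder with ``the standard $L^2$ estimate for iterated integrals''; this does not give $O(h_t^{m+1})$, since a length-$(m+1)$ iterated integral whose multi-index consists entirely of stochastic indices has $L^2$ norm only of order $h_t^{(m+1)/2}$. The sharp order requires the same mechanism you invoke elsewhere: every remainder term whose multi-index contains a ``$1$'' has vanishing conditional expectation (it is, after Fubini, an integral of a mean-zero stochastic integral of an adapted process), so only the pure-time index $(0)_{m+1}$ survives, and boundedness of $\mathcal{L}^{(0)_{m+1}}v$ on $\mathcal{A}_b^{m+1}$ then gives $O(h_t^{m+1})$.

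The more substantive issue is in (\ref{Ito-Taylor-expansions-dw}): the pairing you describe does not, as asserted, ``reproduce the prefactors'' of the displayed formula. At order $h_t^{k+1}$ the multi-indices surviving the pairing with $\Delta W$ are all $k+1$ placements of a single index ``$1$'' among $k+1$ slots; each contributes weight $h_t^{k+1}/(k+1)!$ attached to a differently ordered composition $(\mathcal{L}^{(0)})^{j}\circ\mathcal{L}^{(1)}\circ(\mathcal{L}^{(0)})^{k-j}v$. Their sum equals the displayed $\frac{h_t^{k+1}}{k!}\,\mathcal{L}^{(1)}(\mathcal{L}^{(0)})^{k}v$ only when the relevant commutators vanish; in general the computation produces the symmetrized average. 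A one-line check: for $dX_t=X_t\,dt+dW_t$ and $v(t,x)=x$ the left side of (\ref{Ito-Taylor-expansions-dw}) is $e^{h_t}-1=h_t+\tfrac{h_t^2}{2}+\cdots$, whereas $h_tv_t^{(1)}+h_t^2v_t^{(1,0)}=h_t+h_t^2$, while $\tfrac{h_t^2}{2}\bigl(\mathcal{L}^{(1)}\mathcal{L}^{(0)}+\mathcal{L}^{(0)}\mathcal{L}^{(1)}\bigr)v=\tfrac{h_t^2}{2}$ matches. You should therefore either carry the full sum over orderings --- which still yields the $O(h_t^{k+1})$ structure that Lemmas \ref{parametersofY}--\ref{parametersofYZ} actually rely on --- or justify the commutation for the specific functions to which the expansion is applied; as written, the claim that your pairing computation lands exactly on the stated closed form is unsubstantiated. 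Your observation that a bare Cauchy--Schwarz split cannot give the $O(h_t^{m+2})$ remainder is correct and well taken.
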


\begin{lemma} \label{parametersofY}
Suppose the assumptions (i)-(iii) hold. Furthermore let
$f(t,Y_t,Z_t)\in C_b^3$.
Then, for $i=0,1,\cdots,N-1$,
$$R^i_{yf}= O(h_t^{3}),\quad\widetilde{R}^i_{yf}= O(h_t^{2}).$$
\end{lemma}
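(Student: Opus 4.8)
The plan is to recognize both remainders as deterministic one-dimensional quadrature errors and to bound them through the It\^o--Taylor expansion of the Proposition. The key preliminary observation comes from (\ref{2-2}): since $Y_s=u(s,X_s)$ and $Z_s=\nabla u(s,X_s)\sigma(s,X_s)$, the integrand $f_s=f(s,Y_s,Z_s)$ equals $v(s,X_s)$ for the composite function $v(s,x):=f\big(s,u(s,x),\nabla u(s,x)\sigma(s,x)\big)$, which under assumptions (i)--(iii) together with $f(t,Y_t,Z_t)\in C_b^3$ belongs to $\mathcal{A}_b^{3}$. Consequently $\phi(s):=\mathbb{E}_{t_i}^x[f_s]=\mathbb{E}_{t_i}^x[v(s,X_s)]$ is a smooth deterministic function of $s$, and by linearity of the conditional expectation
\[
R^i_{yf}=\int_{t_i}^{t_{i+1}}\phi(s)\,ds-h_t\Big[\tfrac{1}{2\alpha}\,\phi(t_{i+1-\alpha})+\big(1-\tfrac{1}{2\alpha}\big)\phi(t_{i+1})\Big],
\]
a pure quadrature error for $\phi$ on $[t_i,t_{i+1}]$ with nodes $t_{i+1-\alpha}=t_i+(1-\alpha)h_t$ and $t_{i+1}=t_i+h_t$.

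I would first treat $R^i_{yf}$. Applying (\ref{Ito-Taylor-expansions}) with base point $t_i$ and increment $s-t_i\in[0,h_t]$ gives
\[
\phi(s)=v_{t_i}+(s-t_i)\,v_{t_i}^{(0)}+\tfrac12(s-t_i)^2\,v_{t_i}^{(0,0)}+O(h_t^3),
\]
and the same expansion at $s=t_{i+1-\alpha}$ and $s=t_{i+1}$ for the two nodes. Inserting these into the integral and the quadrature and matching the coefficients of $v_{t_i}$, $v_{t_i}^{(0)}$ and $v_{t_i}^{(0,0)}$, a direct computation shows the weights reproduce $\int_{t_i}^{t_{i+1}}1\,ds=h_t$ and $\int_{t_i}^{t_{i+1}}(s-t_i)\,ds=\tfrac{h_t^2}{2}$ exactly, so the zeroth- and first-order contributions cancel; the quadratic parts do not, the integral contributing $\tfrac{h_t^3}{6}v_{t_i}^{(0,0)}$ and the rule $\tfrac{\alpha h_t^3}{4}v_{t_i}^{(0,0)}$, whence
\[
R^i_{yf}=\Big(\tfrac16-\tfrac\alpha4\Big)h_t^3\,v_{t_i}^{(0,0)}+O(h_t^4).
\]
Because $v^{(0,0)}=\mathcal{L}^{(0)}\mathcal{L}^{(0)}v$ is bounded uniformly in $i$, this yields $R^i_{yf}=O(h_t^3)$.

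For $\widetilde{R}^i_{yf}$ I would repeat the argument on the shorter interval $[t_{i+1-\alpha},t_{i+1}]$, of length $\alpha h_t$, with base point $t_{i+1-\alpha}$ and the single right-endpoint node $t_{i+1}$. Writing $\psi(s):=\mathbb{E}_{t_{i+1-\alpha}}^x[f_s]$ and expanding, via (\ref{Ito-Taylor-expansions}),
\[
\psi(s)=v_{t_{i+1-\alpha}}+(s-t_{i+1-\alpha})\,v_{t_{i+1-\alpha}}^{(0)}+O(h_t^2),
\]
the rectangle rule $\alpha h_t\,\psi(t_{i+1})$ reproduces only the constant contribution $\int_{t_{i+1-\alpha}}^{t_{i+1}}v_{t_{i+1-\alpha}}\,ds=\alpha h_t\,v_{t_{i+1-\alpha}}$ exactly, while the integral and the rule leave the first-order terms $\tfrac12\alpha^2h_t^2\,v_{t_{i+1-\alpha}}^{(0)}$ and $\alpha^2 h_t^2\,v_{t_{i+1-\alpha}}^{(0)}$ respectively, so
\[
\widetilde{R}^i_{yf}=-\tfrac12\,\alpha^2 h_t^2\,v_{t_{i+1-\alpha}}^{(0)}+O(h_t^3).
\]
Since $v^{(0)}=\mathcal{L}^{(0)}v$ is bounded, $\widetilde{R}^i_{yf}=O(h_t^2)$.

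The main obstacle is the regularity reduction in the first step: one must verify that the composite $v(s,x)=f\big(s,u(s,x),\nabla u(s,x)\sigma(s,x)\big)$ indeed lies in $\mathcal{A}_b^{3}$, so that $v^{(0)}$ and $v^{(0,0)}$ are well-defined and bounded uniformly in $i$ and $h_t$; this is exactly where the hypothesis $f(t,Y_t,Z_t)\in C_b^3$ enters, together with $u\in C_b^{1,2}$ and $b,\sigma\in C_b^1$ from (i) and (\ref{2-2}). Once this is granted, the constants in every $O(\cdot)$ remainder depend only on the uniform bounds on the $\mathcal{L}^{(0)}$-derivatives of $v$ and are hence independent of $i$, which delivers the stated orders uniformly over $i=0,1,\dots,N-1$; the rest is the elementary coefficient bookkeeping sketched above.
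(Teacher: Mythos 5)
Your proof is correct and reaches the same conclusion by what is, at bottom, the same Itô--Taylor bookkeeping as the paper, organized slightly differently. The paper first converts the integral $\int_{t_i}^{t_{i+1}}\mathbb{E}_{t_i}^x[f_s]\,ds$ into the difference $Y_{t_i}-\mathbb{E}_{t_i}^x[Y_{t_{i+1}}]$ via the identity $\frac{d}{ds}\mathbb{E}_{t}^x[Y_s]=-\mathbb{E}_{t}^x[f_s]$ and the relation $f=-\mathcal{L}^{(0)}u$, and then applies the expansion (\ref{Ito-Taylor-expansions}) only at the discrete nodes $t_{i+1}$ and $t_{i+1-\alpha}$; you instead keep the integral, expand the integrand $f_s=v(s,X_s)$ for every $s$ in the subinterval, and integrate termwise, which additionally requires (and you correctly flag) that the remainder be uniform in $s$, guaranteed by $v\in\mathcal{A}_b^3$. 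The two computations are consistent: with $v=f=-\mathcal{L}^{(0)}u$ your leading coefficient $\left(\tfrac16-\tfrac{\alpha}{4}\right)h_t^3\,v_{t_i}^{(0,0)}=\tfrac{2-3\alpha}{12}h_t^3\,v_{t_i}^{(0,0)}$ is exactly the paper's $\tfrac{3\alpha-2}{12}h_t^3\,u_{t_i}^{(0,0,0)}$, and likewise $-\tfrac12\alpha^2h_t^2\,v_{t_{i+1-\alpha}}^{(0)}$ matches $\tfrac{\alpha^2h_t^2}{2}u_{t_{i+1-\alpha}}^{(0,0)}$. Your direct quadrature-error viewpoint is arguably more transparent about why the scheme's weights are exact on constants and linears, while the paper's route avoids integrating an expansion; both yield the same orders with constants uniform in $i$.
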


\begin{proof}
By the assumptions (i)-(iii), the integrand $\mathbb{E}_{t}^x[f_{s}]$, for $s > t$, remains continuous in $s$. Consequently, we arrive at the ordinary differential equation

\begin{equation}
\frac{d\mathbb{E}_{t}^x[Y_{s}]}{ds}=-\mathbb{E}_{t}^x[f_{s}],~~~s\in[t,T]. \label{3-1}%
\end{equation}
From the definition of $R^i_{yf}$ and (\ref{FBSDE-err-y-1}), we know
\begin{equation}
\begin{aligned}
R^i_{yf} = & \int_{t_i}^{t_{i + 1}} \mathbb{E}_{t_i}^x [f_s] \, ds - h_t \mathbb{E}_{t_i}^x \left[\frac{1}{2\alpha} f_{t_{i + 1 - \alpha}} + \left(1 - \frac{1}{2\alpha}\right) f_{t_{i + 1}}\right] \\
= & {Y_{{t_i}}} - \mathbb{E}_{{t_i}}^x\left[ {{Y_{{t_{i + 1}}}} + \frac{{{h_t}}}{{2\alpha }}{f_{{t_{i + 1 - \alpha }}}} + {h_t}(1 - \frac{1}{{2\alpha }}){f_{{t_{i + 1}}}}} \right].
\label{Err-yf}
\end{aligned}
\end{equation}
Substituting (\ref{3-1}) into (\ref{Err-yf}), then utilizing (\ref{2-3}) to the derived equation, we have
\begin{equation}
\begin{aligned}
R^i_{yf} = \mathbb{E}_{t_i}^x \left[ u_{t_i} - u_{t_{i+1}} + \frac{h_t}{2\alpha} u_{t_{i + 1 - \alpha}}^{(0)} + h_t \left(1 - \frac{1}{2\alpha}\right) u_{t_{i + 1}}^{(0)} \right],
\label{Err-yf-1}
\end{aligned}
\end{equation}
where $u_{t_i} = u(t_i,X_{t_i})$.
Applying It\^{o}-Taylor expansion (\ref{Ito-Taylor-expansions}) at $(t_{i},X_{t_i})$, we have
\begin{equation}
\begin{aligned}
\mathbb{E}_{t_i}^x[u_{t_{i+1}}] &= u_{t_i} + h_t u^{(0)}_{t_i} + \frac{h_t^2}{2} u_{t_i}^{(0,0)} + \frac{h_t^3}{3!} u_{t_i}^{(0,0,0)} + O(h_t^4), \\
\mathbb{E}_{t_i}^x[u_{t_{i + 1 - \alpha}}^{(0)}] &= u_{t_i}^{(0)} + (1 - \alpha) h_t u_{t_i}^{(0,0)} + \frac{(1 - \alpha)^2 h_t^2}{2} u_{t_i}^{(0,0,0)} + O(h_t^3), \\
\mathbb{E}_{t_i}^x[u_{t_{i + 1}}^{(0)}] &= u_{t_i}^{(0)} + h_t u_{t_i}^{(0,0)} + \frac{h_t^2}{2} u_{t_i}^{(0,0,0)} + O(h_t^3).
\label{Err-yf-2}
\end{aligned}
\end{equation}
Plugging (\ref{Err-yf-2}) into (\ref{Err-yf-1}), we get
\begin{equation}
\begin{aligned}
R^i_{yf}
= & \mathbb{E}_{{t_i}}^x[\frac{{(3\alpha  - 2)h_t^3}}{{12}}u_{{t_i}}^{(0,0,0)} + O(h_t^4)]. \label{Err-yf-3}
\end{aligned}
\end{equation}
(\ref{Err-yf-3}) shows that the result $R^i_{yf}= O(h_t^{3})$ is obvious.
Analogously, by the definition of $\widetilde{R}^i_{yf}$ and (\ref{FBSDE-err-y-2}), we obtain
\begin{equation}
\begin{aligned}
\widetilde{R}^i_{yf} = & \int_{{t_{i + 1 - \alpha }}}^{{t_{i + 1}}} {\mathbb{E}_{{t_{i + 1 - \alpha }}}^x} [f(s,{Y_s},{Z_s})]ds - \alpha {h_t}\mathbb{E}_{{t_{i + 1 - \alpha }}}^x[{f_{{t_{i + 1}}}}]\\
= & {Y_{{t_{i + 1 - \alpha }}}} - \mathbb{E}_{{t_{i + 1 - \alpha }}}^x[{Y_{{t_{i + 1}}}} + \alpha {h_t}{f_{{t_{i + 1}}}}]. \label{Err-yf-1-1}
\end{aligned}
\end{equation}
Substituting (\ref{3-1}) into (\ref{Err-yf-1-1}) and then using It\^{o}-Taylor expansion (\ref{Ito-Taylor-expansions}) at $(t_{i + 1 - \alpha}, X_{t_{i + 1 - \alpha}})$ on the derived equation, we have
\begin{equation}
\begin{aligned}
\widetilde{R}^i_{yf}
= & \mathbb{E}_{{t_{i + 1 - \alpha }}}^x[{u_{{t_{i + 1 - \alpha }}}} - {u_{{t_{i + 1}}}} + \alpha {h_t}u_{{t_{i + 1}}}^{(0)}] \\
= & \mathbb{E}_{{t_{i + 1 - \alpha }}}^x[{u_{{t_{i + 1 - \alpha }}}} - ({u_{{t_{i + 1 - \alpha }}}} + \alpha {h_t}u_{{t_{i + 1 - \alpha }}}^{(0)} + \frac{{{\alpha ^2}h_t^2}}{2}u_{{t_{i + 1 - \alpha }}}^{(0,0)} + O(h_t^3)) \\
& + \alpha {h_t}(u_{{t_{i + 1 - \alpha }}}^{(0)} + \alpha {h_t}u_{{t_{i + 1 - \alpha }}}^{(0,0)} + O(h_t^2))] \\
= & \mathbb{E}_{{t_{i + 1 - \alpha }}}^x[\frac{{{\alpha ^2}h_t^2}}{2}u_{{t_{i + 1 - \alpha }}}^{(0,0)} + O(h_t^3)]. \label{Err-yf-2-1}
\end{aligned}
\end{equation}
Notice that the above result implies the result $\widetilde{R}^i_{yf}= O(h_t^{2})$.
\end{proof}

\begin{lemma}\label{parametersofZ}
Suppose the assumptions (i)-(iii) hold. Furthermore let
$f(t,Y_t,Z_t)\in C_b^3$.
Then, for $i=0,1,\cdots,N-1$,
$$ R^i_{z}= O(h_t^{3}),\quad R^i_{z,3}= O(h_t^{3}).$$
\end{lemma}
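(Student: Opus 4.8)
The plan is to mirror the proof of Lemma \ref{parametersofY}: reduce each error term to the expectation of a smooth function of $(s,X_s)$ via the Markovian representation (\ref{2-2}), insert the PDE (\ref{2-3}), and expand with the It\^o--Taylor formulas of the Proposition. Throughout I would write $f_s = F(s,X_s)$ and $Z_s = z(s,X_s)$, where by (\ref{2-2}) one has $z = \nabla u\,\sigma = u^{(1)}$ (i.e. $z=\mathcal{L}^{(1)}u$ along the path), and record the PDE (\ref{2-3}) in the form $F = -\mathcal{L}^{(0)}u = -u^{(0)}$. These two structural identities, together with the expansions (\ref{Ito-Taylor-expansions}) and (\ref{Ito-Taylor-expansions-dw}), are the only inputs.

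For $R^i_z = R^i_{z,1} - R^i_{z,2}$ I would treat the two pieces separately. In $R^i_{z,1}$ I expand the exact integral $\int_{t_i}^{t_{i+1}}\mathbb{E}_{t_i}^x[f_s\Delta W_{t_i,s}^\top]\,ds$ by (\ref{Ito-Taylor-expansions-dw}) with increment $\tau=s-t_i$ and integrate, obtaining $\tfrac{h_t^2}{2}F^{(1)} + O(h_t^3)$; the quadrature term, expanded with increments $(1-\alpha)h_t$ and $h_t$, gives $h_t\big[\tfrac{1}{2\alpha}(1-\alpha)+(1-\tfrac{1}{2\alpha})\big]h_t F^{(1)} + O(h_t^3)$. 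The bracket equals $\tfrac12$, so the $O(h_t^2)$ terms cancel and $R^i_{z,1}=O(h_t^3)$. For $R^i_{z,2}$ I recognize $\int_{t_i}^{t_{i+1}}\mathbb{E}_{t_i}^x[Z_s]\,ds - \tfrac{h_t}{2}\mathbb{E}_{t_i}^x[Z_{t_i}+Z_{t_{i+1}}]$ as the trapezoidal-rule error for a smooth integrand and expand $\mathbb{E}_{t_i}^x[Z_s]$ and $\mathbb{E}_{t_i}^x[Z_{t_{i+1}}]$ by (\ref{Ito-Taylor-expansions}); the $h_t$- and $h_t^2$-order contributions match, leaving $-\tfrac{1}{12}h_t^3 z^{(0,0)} + O(h_t^4)=O(h_t^3)$. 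Hence $R^i_z = O(h_t^3)$.

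For $R^i_{z,3}$ I would not expand $R^i_{z,31}$ and $R^i_{z,32}$ in isolation, but instead use the equivalent closed form underlying (\ref{E-BSDE-t-i-Euler-1}), namely $R^i_{z,3} = \alpha h_t Z_{t_{i+1-\alpha}} - \mathbb{E}_{t_{i+1-\alpha}}^x[\,Y_{t_{i+1}}\Delta W_{i+1-\alpha,i+1}^\top + \alpha h_t f_{t_{i+1}}\Delta W_{i+1-\alpha,i+1}^\top\,]$. Applying (\ref{Ito-Taylor-expansions-dw}) at base point $t_{i+1-\alpha}$ with increment $\alpha h_t$ yields $\mathbb{E}_{t_{i+1-\alpha}}^x[Y_{t_{i+1}}\Delta W^\top] = \alpha h_t\,u^{(1)} + \alpha^2 h_t^2\,u^{(1,0)} + O(h_t^3)$ and $\alpha h_t\,\mathbb{E}_{t_{i+1-\alpha}}^x[f_{t_{i+1}}\Delta W^\top] = \alpha^2 h_t^2\,F^{(1)} + O(h_t^3)$. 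Since $\alpha h_t Z_{t_{i+1-\alpha}} = \alpha h_t\,u^{(1)}$, the $O(h_t)$ terms cancel and the $O(h_t^2)$ coefficient is $\alpha^2\big(u^{(1,0)} + F^{(1)}\big)$. The crux is the identity $F^{(1)} = -u^{(1,0)}$, obtained by applying $\mathcal{L}^{(1)}$ to $F=-\mathcal{L}^{(0)}u$ so that $F^{(1)} = \mathcal{L}^{(1)}F = -\mathcal{L}^{(1)}\mathcal{L}^{(0)}u = -u^{(1,0)}$. This makes the order-$h_t^2$ term vanish, leaving $R^i_{z,3}=O(h_t^3)$.

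The main obstacle is precisely this order-$h_t^2$ cancellation for $R^i_{z,3}$: it is not a generic quadrature fact but relies on the PDE (\ref{2-3}) --- that is, on $(Y,Z)$ being the genuine solution pair so that $F=-u^{(0)}$ --- and on pairing the expansion of $Y_{t_{i+1}}\Delta W^\top$ against that of $f_{t_{i+1}}\Delta W^\top$ at the \emph{common} base point $t_{i+1-\alpha}$, rather than estimating the two fragments $R^i_{z,31}$ and $R^i_{z,32}$ in isolation. A secondary difficulty is purely bookkeeping: the three distinct increments $h_t$, $(1-\alpha)h_t$, $\alpha h_t$ and the two base points $t_i$ and $t_{i+1-\alpha}$ must be tracked consistently, so that the weighted leading It\^o--Taylor coefficients are aligned before cancellation is invoked.
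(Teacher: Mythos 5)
Your proposal is correct and follows essentially the same route as the paper: It\^o--Taylor expansions (\ref{Ito-Taylor-expansions})--(\ref{Ito-Taylor-expansions-dw}) combined with the identities $f=-\mathcal{L}^{(0)}u$ and $Z=\mathcal{L}^{(1)}u$ from (\ref{2-2})--(\ref{2-3}), including the same closed-form treatment of $R^i_{z,3}$ at base point $t_{i+1-\alpha}$ with the cancellation $F^{(1)}=-u^{(1,0)}$. The only cosmetic difference is that you bound $R^i_{z,1}$ and $R^i_{z,2}$ separately, whereas the paper substitutes all four expansions into the single identity (\ref{FBSDE-z-t-i}) and reads off $\frac{2}{h_t}R^i_z=(\alpha-\tfrac12)h_t^2u_{t_i}^{(1,0,0)}+O(h_t^3)$; both computations are equivalent.
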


\begin{proof}
By It\^{o}-Taylor expansions (\ref{Ito-Taylor-expansions}) and (\ref{Ito-Taylor-expansions-dw}), we have
\begin{equation}
\begin{aligned}
\mathbb{E}_{t_i}^x[Y_{t_{i+1}}\Delta W_{i,i+1}^{\top}]
= & h_tu_{t_i}^{(1)}+h_t^2u_{t_i}^{(1,0)}+\frac{h_t^{3}}{2}u_{t_i}^{(1,0,0)}+O(h_t^{4}),\\
\mathbb{E}_{t_i}^x[Z_{t_{i+1}}]
= & u_{t_i}^{(1)}+ h_tu_{t_i}^{(1,0)}+ \frac{h_t^2}{2}u_{t_i}^{(1,0,0)}+O(h_t^{3}).
\end{aligned}
\label{IT-1}
\end{equation}
By (\ref{2-2}), (\ref{2-3}) and It\^{o}-Taylor expansion (\ref{Ito-Taylor-expansions-dw}), we get
\begin{equation}
\begin{aligned}
\mathbb{E}_{t_i}^x[f_{t_{i + 1 - \alpha}} \Delta W_{i,i + 1 - \alpha}^\top ] &= -\mathbb{E}_{t_i}^x[u_{t_{i + 1 - \alpha}}^{(0)} \Delta W_{i,i + 1 - \alpha}^\top ] \\
&= - (1 - \alpha) h_t u_{t_i}^{(1,0)} - (1 - \alpha)^2 h_t^2 u_{t_i}^{(1,0,0)} - O(h_t^3), \\
\mathbb{E}_{t_i}^x[f_{t_{i + 1}} \Delta W_{i,i + 1}^\top ] &= -\mathbb{E}_{t_i}^x[u^{(0)}_{t_{i+1}} \Delta W_{i,i+1}^\top] = - h_t u_{t_i}^{(1,0)} - h_t^2 u_{t_i}^{(1,0,0)} - O(h_t^3).
\end{aligned}
\label{IT-2}
\end{equation}
Substituting (\ref{IT-1}), (\ref{IT-2}) into (\ref{FBSDE-z-t-i}), we obtain
\begin{equation}
\begin{aligned}
\frac{2}{h_t}R^i_z
= (\alpha  - \frac{1}{2})h_t^2u_{{t_i}}^{(1,0,0)} + O(h_t^3).
\end{aligned}
\label{Err-Z}
\end{equation}
From (\ref{Err-Z}), we obtain the result $R^i_{z}= O(h_t^{3})$.
Similarly, from (\ref{E-BSDE-t-i-Euler-1}), It\^{o}-Taylor expansions (\ref{Ito-Taylor-expansions}) and (\ref{Ito-Taylor-expansions-dw}), we have
\begin{align}
R^i_{z,3}
=&\alpha {h_t}{Z_{{t_{i + 1 - \alpha }}}} -\mathbb{E}_{t_{i + 1 - \alpha}}^x[{Y_{{t_{i + 1}}}}\Delta W_{i + 1 - \alpha ,i + 1}^ \top  + \alpha {h_t}{f_{{t_{i + 1}}}}\Delta W_{i + 1 - \alpha ,i + 1}^ \top]\nonumber\\
=&\mathbb{E}_{{t_{i + 1 - \alpha }}}^x\left[ {\frac{{{\alpha ^3}h_t^3}}{2}u_{{t_{i + 1 - \alpha }}}^{(1,0,0)} + O(h_t^4)} \right].\label{Err-Z-1}
\end{align}
From (\ref{Err-Z-1}), we deduce the result $R^i_{z,3}= O(h_t^{3})$.
\end{proof}

\begin{lemma} \label{parametersofYZ}
Suppose the assumptions (i)-(iii) hold. Furthermore let
$f(t,Y_t,Z_t)\in C_b^3$.
Then, for $i=0,1,\cdots,N-1$,
$$ R^i_{y}= O(h_t^{3}).$$
\end{lemma}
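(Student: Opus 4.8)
The plan is to exploit the decomposition $R^i_y = R^i_{yf} + \widehat{R}^i_{yf}$ recorded in the definitions preceding (\ref{E-GBSDE-t-i-M-Euler-pi}), together with the two preceding lemmas. Since Lemma \ref{parametersofY} already gives $R^i_{yf} = O(h_t^3)$, it suffices to show that the correction term $\widehat{R}^i_{yf} = h_t \mathbb{E}_{t_i}^x[f_{t_{i+1-\alpha}} - \widetilde{f}_{t_{i+1-\alpha}}]$ is likewise of order $O(h_t^3)$. The whole argument then reduces to estimating the difference between $f$ evaluated at the exact pair $(Y_{t_{i+1-\alpha}}, Z_{t_{i+1-\alpha}})$ and at the Euler-predicted pair $(\widetilde{Y}_{t_{i+1-\alpha}}, \widetilde{Z}_{t_{i+1-\alpha}})$.

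First I would invoke the Lipschitz assumption (iii) on $f$. Because both terms evaluate $f$ at the same time $t_{i+1-\alpha}$, we obtain
$$|f_{t_{i+1-\alpha}} - \widetilde{f}_{t_{i+1-\alpha}}| \le L\left(|Y_{t_{i+1-\alpha}} - \widetilde{Y}_{t_{i+1-\alpha}}| + |Z_{t_{i+1-\alpha}} - \widetilde{Z}_{t_{i+1-\alpha}}|\right).$$
Next I would identify each argument-difference with a remainder of known order. Comparing the definition of $\widetilde{Y}_{t_{i+1-\alpha}}$ with the exact identity (\ref{FBSDE-err-y-2}) gives $Y_{t_{i+1-\alpha}} - \widetilde{Y}_{t_{i+1-\alpha}} = \widetilde{R}^i_{yf}$, which is $O(h_t^2)$ by Lemma \ref{parametersofY}. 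Comparing $\widetilde{Z}_{t_{i+1-\alpha}}$ with (\ref{E-BSDE-t-i-Euler-1}) gives $Z_{t_{i+1-\alpha}} - \widetilde{Z}_{t_{i+1-\alpha}} = \frac{1}{\alpha h_t} R^i_{z,3}$, and since $R^i_{z,3} = O(h_t^3)$ by Lemma \ref{parametersofZ}, this too is $O(h_t^2)$. Substituting both bounds yields $|f_{t_{i+1-\alpha}} - \widetilde{f}_{t_{i+1-\alpha}}| = O(h_t^2)$, hence $\widehat{R}^i_{yf} = h_t \cdot O(h_t^2) = O(h_t^3)$; adding $R^i_{yf} = O(h_t^3)$ completes the estimate.

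The argument is essentially bookkeeping once the decomposition is in place, and the step I would be most careful about is the $Z$-component, where the Euler prediction carries the factor $1/(\alpha h_t)$. This extra negative power of $h_t$ means that the accuracy gain must come entirely from the third-order remainder: one must use that $R^i_{z,3}$ is genuinely $O(h_t^3)$ (not merely $O(h_t^2)$), so that division by $\alpha h_t$ still leaves an $O(h_t^2)$ term and the whole correction survives multiplication by $h_t$ as an $O(h_t^3)$ quantity. This is precisely what Lemma \ref{parametersofZ} supplies, so no regularity beyond $f \in C_b^3$ is required.
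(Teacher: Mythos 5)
Your proposal is correct and follows essentially the same route as the paper's proof: the decomposition $R^i_y = R^i_{yf} + \widehat{R}^i_{yf}$, the Lipschitz bound on $f_{t_{i+1-\alpha}} - \widetilde{f}_{t_{i+1-\alpha}}$, and the identifications $Y_{t_{i+1-\alpha}} - \widetilde{Y}_{t_{i+1-\alpha}} = \widetilde{R}^i_{yf}$ and $Z_{t_{i+1-\alpha}} - \widetilde{Z}_{t_{i+1-\alpha}} = \frac{1}{\alpha h_t}R^i_{z,3}$ are exactly the steps the paper takes. Your remark that the $1/(\alpha h_t)$ factor forces one to use the full $O(h_t^3)$ order of $R^i_{z,3}$ is precisely the right point of care.
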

\begin{proof}
From the definition of $\widehat{R}^i_{yf}$, we know
\begin{equation}
\begin{aligned}
\widehat{R}^i_{yf}=& h_t\mathbb{E}_{t_i}^x[f_{t_{i + 1 - \alpha }}-\widetilde{f}_{t_{i + 1 - \alpha }}]\\
\le & h_t|\mathbb{E}_{t_i}^x[f_{t_{i + 1 - \alpha }}-\widetilde{f}_{t_{i + 1 - \alpha }}]|.\label{R-i-yf-1}
\end{aligned}
\end{equation}
By the Lipschitz condition and the definitions of $\widetilde{Y}_{t_{i + 1 - \alpha }} $ and $\widetilde{Z}_{t_{i + 1 - \alpha }}$, we can rewrite (\ref{R-i-yf-1}) as
\begin{equation}
\begin{aligned}
\widehat{R}^i_{yf}
\le& h_tL\mathbb{E}_{t_i}^x[|Y_{t_{i + 1 - \alpha }}-\widetilde{Y}_{t_{i + 1 - \alpha }}| + |Z_{t_{i + 1 - \alpha }}-\widetilde{Z}_{t_{i + 1 - \alpha }}|] \\
=& {h_t}L\mathbb{E}_{{t_i}}^x[|{Y_{{t_{i + 1 - \alpha }}}} - \mathbb{E}_{{t_{i + 1 - \alpha }}}^x[{Y_{{t_{i + 1}}}} + \alpha {h_t}{f_{{t_{i + 1}}}}]|\\
&+ |{Z_{{t_{i + 1 - \alpha }}}} -\mathbb{E} _{{t_{i + 1 - \alpha }}}^x[\frac{1}{{\alpha {h_t}}}{Y_{{t_{i + 1}}}}\Delta W_{i + 1 - \alpha ,i + 1}^ \top  + {f_{{t_{i + 1}}}}\Delta W_{i + 1 - \alpha ,i + 1}^ \top ]|]\\
=&h_tL\mathbb{E}_{t_i}^x[|\widetilde{R}^i_{yf}| + |\frac{1}{\alpha h_t}R^i_{z,3}|].\nonumber
\end{aligned}
\end{equation}
From Lemmas \ref{parametersofY} and \ref{parametersofZ}, we have
\begin{equation}
\begin{aligned}
\widehat{R}^i_{yf}=O(h_t^3).\label{R-i-yf-2}
\end{aligned}
\end{equation}
By the definition of $R^i_{y}$, (\ref{R-i-yf-2}) and Lemma \ref{parametersofY}, we obtain
\begin{equation}
\begin{array}
[c]{rl}%
&R^i_y=R^i_{yf}+\widehat{R}^i_{yf}= O(h_t^3) .
\end{array}
\nonumber
\end{equation}
The proof is completed.
\end{proof}

In what follows, we focus on analyzing the convergence property of the scheme (\ref{NumSch}). 
\begin{theorem}\label{discretization-error}
Assuming that conditions (i)-(iii) and \(f(t,Y_t,Z_t)\in C_b^3\) are met, we further require that the initial values satisfy
$\max\{\mathbb{E}[|Y_{t_{N}}-Y_{N}^{\pi}|],\mathbb{E}[|Z_{t_{N}}-Z_{N}^{\pi}|]\}=O(h^{2}_t)$.
Then we have, for sufficiently small time step $h_t$
\begin{equation}
\mathbb{E}[\sup\limits_{0\le i\le N}\left|\Delta Y_{i}\right|^2]
+ \sum_{\ell=i}^{N-1}\mathbb{E}[h_t\left|\Delta Z_{\ell}\right|^2]
 \le Ch_t^{4},\label{3-9}
\end{equation}
where 
$C$ denotes a positive constant.
\end{theorem}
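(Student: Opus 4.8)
The plan is to realize the exact solution $(Y_{t_i},Z_{t_i})$, together with its Euler predictors, as a solution of the scheme (\ref{NumSch}) carrying known local residuals, and then to invoke the stability estimate of Theorem \ref{ST-theorem} with these residuals in place of the perturbation terms. First I would fix $\Delta Y_i = Y_{t_i}-Y_i^\pi$, $\Delta Z_i = Z_{t_i}-Z_i^\pi$, and at the intermediate stage $\Delta\widetilde{Y}_{i+1-\alpha} = \widetilde{Y}_{t_{i+1-\alpha}}-Y^\pi_{i+1-\alpha}$, $\Delta\widetilde{Z}_{i+1-\alpha} = \widetilde{Z}_{t_{i+1-\alpha}}-Z^\pi_{i+1-\alpha}$, where $\widetilde{Y}_{t_{i+1-\alpha}},\widetilde{Z}_{t_{i+1-\alpha}}$ are the Euler predictors evaluated along the exact solution. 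The reason for tracking the predictors rather than the true intermediate values $Y_{t_{i+1-\alpha}},Z_{t_{i+1-\alpha}}$ is that the predictor identities for the exact solution have exactly the algebraic form of the first two lines of (\ref{NumSch}) with \emph{no} residual; hence, after subtracting (\ref{NumSch}), the intermediate error equations are clean propagations of $\Delta Y_{i+1},\Delta Z_{i+1}$.

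Next I would write the corrector identities for the exact solution. The $Y$-corrector is already (\ref{FBSDE-err-y-3}) with residual $R^i_y$, while the $Z$-corrector follows from (\ref{FBSDE-z-t-i}) after replacing the exact $f_{t_{i+1-\alpha}}$ by the predictor value $\widetilde{f}_{t_{i+1-\alpha}}$; this substitution produces a $Z$-corrector residual $R^i_{z,\mathrm{corr}} = \tfrac{2}{h_t}R^i_z + \tfrac{1}{\alpha}\mathbb{E}_{t_i}^x[(f_{t_{i+1-\alpha}}-\widetilde{f}_{t_{i+1-\alpha}})\Delta W_{i,i+1-\alpha}^\top]$. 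Subtracting (\ref{NumSch}) then yields a system for $(\Delta Y,\Delta Z)$ having precisely the structure of the permutation error equations (\ref{ST-5}), under the identifications $\varepsilon_{y,\cdot}^\pi\leftrightarrow\Delta Y_\cdot$, $\varepsilon_{z,\cdot}^\pi\leftrightarrow\Delta Z_\cdot$, $R^\pi_{\varepsilon y,\ell}\leftrightarrow R^\ell_y$, $R^\pi_{\varepsilon z,\ell}\leftrightarrow R^\ell_{z,\mathrm{corr}}$, and, crucially, $\widetilde{R}^\pi_{\varepsilon y,\ell}=\widetilde{R}^\pi_{\varepsilon z,\ell}=0$.

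With the error equations in this form, Theorem \ref{ST-theorem} applies verbatim and gives, for every $i$,
\[
\mathbb{E}[|\Delta Y_i|^2] + h_t\sum_{\ell=i}^{N-1}\mathbb{E}[|\Delta Z_\ell|^2] \le C\big(\mathbb{E}[|\Delta Y_N|^2]+h_t\mathbb{E}[|\Delta Z_N|^2]\big) + C\sum_{\ell=i}^{N-1}\mathbb{E}\big[|R^\ell_y|^2 + h_t|R^\ell_{z,\mathrm{corr}}|^2\big].
\]
It then remains to insert the orders. By Lemma \ref{parametersofYZ}, $R^\ell_y=O(h_t^3)$, so $\sum_\ell\mathbb{E}[|R^\ell_y|^2]=N\cdot O(h_t^6)=O(h_t^5)$. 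By Lemma \ref{parametersofZ}, $\tfrac{2}{h_t}R^\ell_z=O(h_t^2)$, and by Lemmas \ref{parametersofY} and \ref{parametersofZ} together with Cauchy--Schwarz the predictor-correction term is $O(h_t^{5/2})$, so $R^\ell_{z,\mathrm{corr}}=O(h_t^2)$ and $h_t\sum_\ell\mathbb{E}[|R^\ell_{z,\mathrm{corr}}|^2]=h_t\cdot N\cdot O(h_t^4)=O(h_t^4)$; the terminal-value assumption contributes $O(h_t^4)$. Summing the three contributions gives the $Ch_t^4$ bound, and, the constant $C$ being uniform in $i$, taking the maximum over $i$ yields the stated estimate.

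Finally, I would flag the delicate points. The conceptual crux is the absorption of the only second-order intermediate residual $\widetilde{R}^i_{yf}=O(h_t^2)$ into the corrector residual $R^i_y=O(h_t^3)$ via $\widehat{R}^i_{yf}=h_t\mathbb{E}_{t_i}^x[f_{t_{i+1-\alpha}}-\widetilde{f}_{t_{i+1-\alpha}}]$; this is exactly what prevents the first-order Euler predictor from degrading the global order, and it is why it is essential to track $\widetilde{Y},\widetilde{Z}$ so that $\widetilde{R}^\pi_{\varepsilon y,\ell}=\widetilde{R}^\pi_{\varepsilon z,\ell}=0$ and the potentially harmful $(h_t+h_t^2)|\widetilde{R}|^2$ term in (\ref{ST-18}) simply does not appear. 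The main technical obstacle is the $Z$-corrector residual: one must verify that the extra term $\tfrac{1}{\alpha}\mathbb{E}_{t_i}^x[(f_{t_{i+1-\alpha}}-\widetilde{f}_{t_{i+1-\alpha}})\Delta W_{i,i+1-\alpha}^\top]$ created by the predictor substitution is genuinely of order $h_t^2$ or better, which I would settle using the Lipschitz bound $|f_{t_{i+1-\alpha}}-\widetilde{f}_{t_{i+1-\alpha}}|\le L(|\widetilde{R}^i_{yf}|+\tfrac{1}{\alpha h_t}|R^i_{z,3}|)=O(h_t^2)$ from Lemmas \ref{parametersofY} and \ref{parametersofZ}, followed by Cauchy--Schwarz against $\Delta W$. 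If the statement is read with the supremum inside the expectation, one additionally applies a discrete Doob/Burkholder--Davis--Gundy inequality to the martingale part of the $\Delta Y$ recursion; otherwise the maximum-outside reading follows at once from the uniform-in-$i$ bound above.
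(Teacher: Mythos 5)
Your argument is correct and reaches the same $O(h_t^4)$ bound, but it is organized differently from the paper's proof. The paper does not invoke Theorem \ref{ST-theorem} as a black box: it re-runs the entire stability computation with the exact solution in place of the perturbed one, tracking the errors at the \emph{true} intermediate values $Y_{t_{i+1-\alpha}},Z_{t_{i+1-\alpha}}$, so that the intermediate-stage residuals $\widetilde{R}^i_{yf}$ and $R^i_{z,3}$ survive into the final Gronwall sum with weight $h_t^2$ (harmless, since $h_t^2|\widetilde{R}^i_{yf}|^2=O(h_t^6)$ per term). You instead track the errors at the \emph{predictor} values $\widetilde{Y}_{t_{i+1-\alpha}},\widetilde{Z}_{t_{i+1-\alpha}}$, which eliminates the intermediate residuals and lets you quote the stability theorem verbatim with $\widetilde{R}^{\pi}_{\varepsilon y,\ell}=\widetilde{R}^{\pi}_{\varepsilon z,\ell}=0$; the price is the extra term $\frac{1}{\alpha}\mathbb{E}_{t_i}^x[(f_{t_{i+1-\alpha}}-\widetilde{f}_{t_{i+1-\alpha}})\Delta W_{i,i+1-\alpha}^{\top}]$ in the $Z$-corrector residual, which you correctly bound at order $h_t^{5/2}$ by the same Lipschitz-plus-Cauchy--Schwarz device that Lemma \ref{parametersofYZ} uses for $\widehat{R}^i_{yf}$ --- this is the one estimate your route needs that is not literally a lemma in the paper. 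Your version buys modularity (consistency plus stability equals convergence) and makes the absorption of the first-order predictor error into $R^i_y$ more transparent; the paper's version avoids having to check that the perturbation framework of Section 3 genuinely covers truncation errors, at the cost of repeating the estimates. Two caveats you share with the published argument: the terminal hypothesis is stated in $L^1$ ($\mathbb{E}[|\Delta Y_N|]=O(h_t^2)$) while the Gronwall bound needs $\mathbb{E}[|\Delta Y_N|^2]=O(h_t^4)$, and Jensen runs the wrong way for that implication; and, as you correctly flag, the recursion only yields $\sup_i\mathbb{E}[|\Delta Y_i|^2]$ rather than the $\mathbb{E}[\sup_i|\Delta Y_i|^2]$ appearing in (\ref{3-9}), so the Doob/BDG step you mention is in fact missing from the paper's own proof as well.
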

\begin{proof}
Let $\Delta Y_{i}=Y_{t_i} -Y_{i}^{\pi},\Delta Z_{i}=Z_{t_i}-Z_{i}^{\pi},$
$\Delta {Y_{i + 1 - \alpha }} = {Y_{{t_{i + 1 - \alpha }}}} - Y_{i + 1 - \alpha }^\pi, \Delta {Z_{i + 1 - \alpha }} = {Z_{{t_{i + 1 - \alpha }}}} - Z_{i + 1 - \alpha }^\pi$.
From (\ref{FBSDE-z-t-i}) and (\ref{GBSDE-t-i-z-pi}),
we have
\begin{align}
\left|\Delta Z_{i}\right|
= & \left| \mathbb{E}_{t_i}^x \left[
\Delta Y_{i + 1} \frac{2 \Delta W_{i,i + 1}^\top}{h_t}
+ \frac{1}{\alpha} ( f_{i + 1 - \alpha} - \widetilde{f}_{i + 1 - \alpha}^\pi ) \Delta W_{i,i + 1 - \alpha}^\top \right. \right. \nonumber \\
& \left. \left.
+ \frac{2\alpha - 1}{\alpha} \left( f_{t_{i + 1}} - f_{i + 1}^\pi \right) \Delta W_{i,i + 1}^\top
- \Delta Z_{i + 1} \right]
+ \frac{2}{h_t} R_z^i \right|.
\label{3-10}
\end{align}
The expression in (\ref{3-10}) can be reformulated by applying the Lipschitz condition, as follows:
\begin{align}
\left|\Delta Z_{i}\right|
\le & \left|\mathbb{E}_{t_i}^x\left[ \Delta Y_{i + 1} \frac{2 \Delta W_{i,i + 1}^\top}{h_t} \right] \right|
+ \frac{L}{\alpha} \mathbb{E}_{t_i}^x\left[ \left( |\Delta Y_{i + 1 - \alpha}| + |\Delta Z_{i + 1 - \alpha}| \right) |\Delta W_{i,i + 1 - \alpha}^\top| \right] \nonumber \\
& + \left| \frac{2\alpha - 1}{\alpha} \right| L \mathbb{E}_{t_i}^x\left[\left( |\Delta Y_{i + 1}| + |\Delta Z_{i + 1}| \right) |\Delta W_{i,i + 1 - \alpha}^\top|\right]
 + \mathbb{E}_{t_i}^x\left[ |\Delta Z_{i + 1}| \right]
 + \left| \frac{2}{h_t} R_z^i \right|.
\label{3-11}
\end{align}
From the Cauchy-Schwarz inequality, the inequality (\ref{3-11}) is represented as
\begin{align}
\left| \Delta Z_i \right|
\le & \left| \frac{2}{h_t} \sqrt{h_t \mathbb{E}_{t_i}^x[|\Delta Y_{i + 1}|^2]} \right| + \frac{L}{\alpha} \left( \sqrt{(1 - \alpha) h_t \mathbb{E}_{t_i}^x[|\Delta Y_{i + 1 - \alpha}|^2]} + \sqrt{(1 - \alpha) h_t \mathbb{E}_{t_i}^x[|\Delta Z_{i + 1 - \alpha}|^2]} \right) \nonumber \\
& + \left| \frac{2\alpha - 1}{\alpha} \right| L \left( \sqrt{h_t \mathbb{E}_{t_i}^x[|\Delta Y_{i + 1}|^2]} + \sqrt{h_t \mathbb{E}_{t_i}^x[|\Delta Z_{i + 1}|^2]} \right)
 + \mathbb{E}_{t_i}^x[|\Delta Z_{i + 1}|] + \left| \frac{2}{h_t} R_z^i \right|.
\label{3-12}
\end{align}
From (\ref{FBSDE-err-y-2}) and (\ref{E-GBSDE-t-i-pi}),we have
\begin{equation}
\begin{aligned}
|\Delta Y_{i + 1 - \alpha}|^2
=&\left|\mathbb{E}_{t_{i + 1 - \alpha}}^x\left[\Delta Y_{i+1}+ \alpha h_t(f_{t_{i+1}}-f^\pi_{i+1})\right]+ \widetilde{R}^i_{yf}\right|^2\\
\le&\left|\mathbb{E}_{t_{i + 1 - \alpha}}^x\left[\Delta Y_{i+1}+\alpha h_tL(|\Delta Y_{i+1}| + |\Delta Z_{i+1}|)\right]+ \widetilde{R}^i_{yf}\right|^2.
\end{aligned}
\label{Err-Y��t-1/2}
\end{equation}
From (\ref{E-BSDE-t-i-Euler-1}) and (\ref{E-GBSDE-t-i-pi-1}),we have
\begin{equation}
\begin{aligned}
|\Delta Z_{i + 1 - \alpha}|
=&\left|\mathbb{E}_{t_{i + 1 - \alpha}}^x\left[\frac{1}{\alpha h_t}\Delta Y_{i+1}\Delta W^\top_{i + 1 - \alpha,i+1}
+(f_{t_{i+1}}-f_{i+1}^\pi)\Delta W^\top_{i + 1 - \alpha,i+1}\right] +R^i_{z,3}\right|^2\\
\le& \left|\mathbb{E}_{t_{i + 1 - \alpha}}^x\left[\frac{1}{\alpha h_t}\Delta Y_{i+1}\Delta W^\top_{i + 1 - \alpha,i+1}
+L(|\Delta {Y_{i + 1}}| + |\Delta {Z_{i + 1}}|)\Delta W^\top_{i + 1 - \alpha,i+1}\right] +R^i_{z,3}\right|^2\\
\le&2\mathbb{E}_{t_{i + 1 - \alpha}}^x\left[(\frac{2}{\alpha h_t}+2\alpha h_tL^2+4L)|\Delta Y_{i+1}|^2+2\alpha h_tL^2|\Delta Z_{i+1}|^2\right] +2\left|R^i_{z,3}\right|^2.
\end{aligned}
\label{Err-Z��t-1/2}
\end{equation}
Plugging (\ref{Err-Y��t-1/2}) and (\ref{Err-Z��t-1/2}) into (\ref{3-12}), we obtain
\begin{align}
\left| \Delta Z_i \right|
\le & \left| \frac{2}{h_t} \sqrt{{h_t} \mathbb{E}_{t_i}^x[|\Delta Y_{i + 1}|^2]} \right|
+ \frac{L}{\alpha} \sqrt{2(1 - \alpha) h_t \mathbb{E}_{t_i}^x \left[ \left( (1 + \alpha h_t L) |\Delta Y_{i + 1}| + \alpha h_tL|\Delta Z_{i + 1}| \right)^2 + (\widetilde{R}_{yf}^i)^2 \right]} \nonumber\\
& + \frac{L}{\alpha} \sqrt{2(1 - \alpha) h_t \mathbb{E}_{t_i}^x \left[ \left( \frac{2}{\alpha h_t} + 2\alpha h_t L^2 + 4L \right) |\Delta Y_{i + 1}|^2 + 2\alpha h_t L^2 |\Delta Z_{i + 1}|^2 + |R_{z,3}^i|^2 \right]} \nonumber\\
& + \left| \frac{2\alpha - 1}{\alpha} \right| L \left( \sqrt{h_t \mathbb{E}_{t_i}^x[|\Delta Y_{i + 1}|^2]} + \sqrt{h_t \mathbb{E}_{t_i}^x[|\Delta Z_{i + 1}|^2]} \right)  + \mathbb{E}_{t_i}^x[|\Delta Z_{i + 1}|] + \left| \frac{2}{h_t} R_z^i \right|.
\label{3-12-11}
\end{align}
Squaring (\ref{3-12-11}), we deduce
\begin{align}
\left|\Delta Z_{i}\right|^2
\le & \, \left(\frac{20}{h_t} + \frac{10(1 - \alpha)h_t L^2}{\alpha^2}
\left(4 + 4\alpha^2 h_t L^2 + 8\alpha h_t L + \frac{2}{\alpha h_t} + 2\alpha h_t L^2 + 4L\right) \right. \nonumber \\
& \left. + \frac{10{(2\alpha - 1)}^2 h_t L^2}{\alpha^2}\right)\mathbb{E}_{t_i}^x[|\Delta Y_{i+1}|^2] \nonumber \\
& + \left(\frac{10(1 - \alpha)h_t L^2}{\alpha^2}(4\alpha^2 h_t^2 L^2 + 2\alpha h_t L^2)
+ \frac{10{(2\alpha - 1)}^2 h_t L^2}{\alpha^2} + 5 \right)\mathbb{E}_{t_i}^x[|\Delta Z_{i+1}|^2] \nonumber \\
& + \frac{20(1 - \alpha)h_t L^2}{\alpha^2}\left(|\widetilde{R}^i_{yf}|^2 + |R^i_{z,3}|^2\right)
+ 5\left|\frac{2}{h_t} R^i_z\right|^2.
\label{3-12-1}
\end{align}
Next, we deal with the term $\left| Y_{t_i} - Y_i^{\pi} \right| $.
From  (\ref{FBSDE-err-y-3}) and  (\ref{E-GBSDE-t-i-M-Euler-pi}), we obtain
\begin{align}
\left|\Delta Y_{i}\right|
\le & \left| \mathbb{E}_{t_i}^x[\Delta Y_{i + 1}] \right|
+ \left| \frac{h_t}{2\alpha} \mathbb{E}_{t_i}^x[\widetilde{f}_{t_{i + 1 - \alpha}} - \widetilde{f}_{i + 1 - \alpha}^\pi] \right|
+ \left| \frac{(2\alpha - 1) h_t}{2\alpha} \mathbb{E}_{t_i}^x[f_{t_{i + 1}} - f_{i + 1}^\pi] \right|
+ \left| R_y^i \right| \nonumber \\
\le & \left| \mathbb{E}_{t_i}^x[\Delta Y_{i+1}] \right|
+ \frac{h_t}{2\alpha} L \mathbb{E}_{t_i}^x \left[ |\widetilde{Y}_{t_{i + 1 - \alpha}} - Y^\pi_{i + 1 - \alpha}| + |\widetilde{Z}_{t_{i + 1 - \alpha}} - Z^\pi_{i + 1 - \alpha}| \right] \nonumber \\
& + \left|\frac{2\alpha - 1}{2\alpha}\right| h_t L\mathbb{E}_{t_i}^x \left[ \left| \Delta Y_{i + 1} \right| + \left| \Delta Z_{i + 1} \right| \right]
+ \left| R_y^i \right|.
\label{3-14}
\end{align}
By the definition of $\widetilde{Y}_{t_{i + 1 - \alpha}}$ and (\ref{E-GBSDE-t-i-pi}), we get
\begin{align}
\left| \widetilde{Y}_{t_{i + 1 - \alpha}} - Y_{i + 1 - \alpha}^\pi \right|
=& \left| \mathbb{E}_{t_{i + 1 - \alpha}}^x \left[ Y_{t_{i + 1}} + \alpha h_t f_{t_{i + 1}} \right]
- \mathbb{E}_{t_{i + 1 - \alpha}}^x \left[ Y_{i + 1}^\pi + \alpha h_t f_{i + 1}^\pi \right] \right| \nonumber \\
\le& (1 + \alpha h_t L) \mathbb{E}_{t_i}^x \left[ |\Delta Y_{i + 1}| \right]
+ \alpha h_t L \mathbb{E}_{t_i}^x \left[ |\Delta Z_{i + 1}| \right].
\label{3-14-1}
\end{align}
By the definition of $\widetilde{Z}_{t_{i + 1 - \alpha}}$ and (\ref{E-GBSDE-t-i-pi-1}), we deduce
\begin{align}
\left| \widetilde{Z}_{t_{i + 1 - \alpha}} - Z_{i + 1 - \alpha}^\pi \right|
\le & \left| \mathbb{E}_{t_{i + 1 - \alpha}}^x \left[ \frac{1}{\alpha h_t} \Delta Y_{i + 1} \Delta W_{i + 1 - \alpha, i + 1}^\top + L (|\Delta Y_{i + 1}| + |\Delta Z_{i + 1}|) \Delta W_{i + 1 - \alpha, i + 1}^\top \right] \right|.
\label{3-14-2}
\end{align}
Substituting (\ref{3-14-1}) and (\ref{3-14-2}) into (\ref{3-14}), we obtain
\begin{align}
\left|\Delta Y_{i}\right|
\le& \left(1 + \frac{h_t L}{2\alpha} (1 + \alpha h_t L) + \left| \frac{2\alpha - 1}{\alpha} \right| h_t L\right) \mathbb{E}_{t_i}^x \left[\left|\Delta Y_{i+1}\right|\right] \nonumber \\
& + \left( \frac{h_t^2 L^2}{\alpha^2} + \left| \frac{2\alpha - 1}{\alpha} \right| h_t L \right) \mathbb{E}_{t_i}^x \left[\left|\Delta Z_{i+1}\right|\right]
+ \left( \frac{L}{2\alpha^2} + \frac{h_t L^2}{2\alpha} \right) \mathbb{E}_{t_i}^x \left[\left|\Delta Y_{i+1} \Delta W^\top_{i + 1 - \alpha, i + 1}\right|\right] \nonumber \\
& + \frac{h_t L^2}{2\alpha} \mathbb{E}_{t_i}^x \left[\left|\Delta Z_{i+1} \Delta W^\top_{i + 1 - \alpha, i + 1}\right|\right]
+ \left|R^i_y\right|.
\label{3-14-3}
\end{align}
Squaring (\ref{3-14-3}), we deduce
\begin{align}
\left|\Delta Y_{i}\right|^2
\le& \, 5 \left( 4 + \frac{h_t^2 L^2}{\alpha^2} + h_t^4 L^4 + \frac{4(2\alpha - 1)^2}{\alpha^2} h_t^2 L^2 + \frac{h_t L^2}{2 \alpha^3} + \frac{h_t^3 L^4}{2 \alpha} \right) \mathbb{E}_{t_i}^x \left[|\Delta Y_{i + 1}|^2\right] \nonumber \\
&+ 5 \left( \frac{h_t^4 L^4}{2} + \frac{2(2\alpha - 1)^2}{\alpha^2} h_t^2 L^2 + \frac{h_t^3 L^4}{4 \alpha} \right) \mathbb{E}_{t_i}^x \left[|\Delta Z_{i + 1}|^2\right]
+ 5 \left|R_y^i\right|^2.
\label{3-14-4}
\end{align}
Adding (\ref{3-12-1}) to (\ref{3-14-4}), we deduce
\begin{align}
&\left|\Delta Y_{i}\right|^2 + h_t\left|\Delta Z_{i}\right|^2 \nonumber\\
\le& \, 5 \bigg( 8 + \frac{4(1 - \alpha)}{\alpha^3} L^2 h_t + \frac{6(2\alpha - 1)^2 + 1 + 8(1 - \alpha) + 8L(1 - \alpha)}{\alpha^2} L^2 h_t^2 \nonumber\\
& \quad + \frac{L + 32(1 - \alpha) + 8L(1 - \alpha)}{2\alpha} L^3 h_t^3 + (1 + 8(1 - \alpha)) L^4 h_t^4 \bigg) \mathbb{E}_{t_i}^x \left[|\Delta Y_{i+1}|^2\right] \nonumber\\
& + 5 \left( h_t + \frac{4(2\alpha - 1)^2}{\alpha^2} L^2 h_t^2 + \frac{16(1 - \alpha) + 1}{4\alpha} L^4 h_t^3 + \frac{16(1 - \alpha) + 1}{2} L^4 h_t^4 \right) \mathbb{E}_{t_i}^x \left[|\Delta Z_{i+1}|^2 \right] \nonumber\\
& + 5 \left|R^i_y\right|^2 + \frac{20(1 - \alpha) h_t^2 L^2}{\alpha^2} \left(|\widetilde{R}^i_{yf}|^2 + |R^i_{z,3}|^2\right) + \frac{20}{h_t} \left|R^i_z\right|^2 \nonumber\\
=& \, 40(1 + C_1 h_t) \mathbb{E}_{t_i}^x \left[|\Delta Y_{i+1}|^2\right] + C_2 h_t \mathbb{E}_{t_i}^x \left[|\Delta Z_{i+1}|^2 \right] \nonumber\\
& + 5 \left|R^i_y\right|^2 + \frac{20(1 - \alpha) h_t^2 L^2}{\alpha^2} \left(|\widetilde{R}^i_{yf}|^2 + |R^i_{z,3}|^2\right) + \frac{20}{h_t} \left|R^i_z\right|^2,
\label{3-14-5}
\end{align}
where $C_1 = \frac{1}{8}(\frac{{4(1 - \alpha )}}{{{\alpha ^3}}}{L^2} + \frac{{6{{(2\alpha  - 1)}^2} + 1 + 8(1 - \alpha ) + 8L(1 - \alpha )}}{{{\alpha ^2}}}{L^2}{h_t} + \frac{{L + 32(1 - \alpha ) + 8L(1 - \alpha )}}{{2\alpha }}{L^3}{h_t}^2 + (1 + 8(1 - \alpha )){L^4}{h_t}^3)$, $C_2 =5(1 + \frac{{4{{(2\alpha  - 1)}^2}}}{{{\alpha ^2}}}{L^2}{h_t} + \frac{{16(1 - \alpha ) + 1}}{{4\alpha }}{L^4}{h_t}^2 + \frac{{16(1 - \alpha ) + 1}}{2}{L^4}{h_t}^3).$
There exist two positive constants $C_0$ and $C_3$ such that
\begin{align}
&C_0\left|\Delta Y_{i}\right|^2
+ h_t\left|\Delta Z_{i}\right|^2\nonumber\\
\le&
(1+C_3h_t)\left(C_0\mathbb{E}_{t_i}^x[|\Delta Y_{i+1}^{\pi}|^2]
+C_2h_t\mathbb{E}_{t_i}^x[|\Delta Z_{i+1}^{\pi}|^2 ]\right)\nonumber\\
& +C_3\left|R^i_y\right|^2 +C_3h_t^2\left((\widetilde{R}^i_{yf})^2+|R^i_{z,3}|^2\right)
+\frac{C_3}{h_t}\left|R^i_z\right|^2.
\label{3-14-5-10}
\end{align}
From (\ref{3-14-5-10}) and the tower property of the conditional expectations, we deduce
\begin{align}
&C_0\mathbb{E}[\left|\Delta Y_{i}\right|^2]
+ (1-C_2)h_t\sum_{\ell=i}^{N-1}(1+C_3h_t)^{\ell-i}\mathbb{E}[\left|\Delta Z_{\ell}\right|^2]\nonumber\\
\le&
(1+C_3h_t)^{N-i}\left(C_0\mathbb{E}[|\Delta Y_{N}|^2]
+C_2h_t\mathbb{E}[|\Delta Z_{N}|^2]\right)\nonumber\\
& +\sum_{\ell=i}^{N-1}(1+C_3h_t)^{\ell-i}\mathbb{E}\left[
C_3\left|R^\ell_y\right|^2 +C_3h_t^2\left((\widetilde{R}^\ell_{yf})^2+|R^\ell_{z,3}|^2\right)
+\frac{C_3}{h_t}\left|R^\ell_z\right|^2\right].
\label{3-14-5-11}
\end{align}
Furthermore, we have
\begin{align}
\mathbb{E}[\left|\Delta Y_{i}\right|^2]
+ h_t\sum_{\ell=i}^{N-1}\mathbb{E}[\left|\Delta Z_{\ell}\right|^2]
\le&
C\left(\mathbb{E}[|\Delta Y_{N}|^2]
+h_t\mathbb{E}[|\Delta Z_{N}|^2 ]\right)\nonumber\\
& +\sum_{\ell=i}^{N-1}C\mathbb{E}\left[
\left|R^\ell_y\right|^2 +h_t^2\left((\widetilde{R}^\ell_{yf})^2+|R^\ell_{z,3}|^2\right)
+\frac{1}{h_t}\left|R^\ell_z\right|^2\right].
\label{3-14-5-12}
\end{align}
From Lemmas \ref{parametersofY}, \ref{parametersofZ} and \ref{parametersofYZ}, we have
\begin{align}
\mathbb{E}[\left|R^i_y\right|^2 +h_t^2\left(|\widetilde{R}^i_{yf}|^2+|R^i_{z,3}|^2\right)
+\frac{1}{h_t}\left|R^i_z\right|^2]
=& O(h_t^5).
\label{3-14-5-1}
\end{align}
From $\max\{\mathbb{E}[|Y_{t_{N}}-Y_{N}^{\pi}|],\mathbb{E}[|Z_{t_{N}}-Z_{N}^{\pi}|]\}=O(h_t^{2})$ and (\ref{3-14-5-1}), we have the following estimates
\begin{align}
\mathbb{E}[\left|\Delta Y_{i}\right|^2]
+ \sum_{\ell=i}^{N-1}\mathbb{E}[h_t\left|\Delta Z_{\ell}\right|^2]
 \le Ch_t^{4}.
\nonumber
\end{align}
The proof is completed.
\end{proof}

\begin{flushleft}
\refstepcounter{remark}
\textbf{\theremark} 
Through the proof provided in Theorem \ref{discretization-error}, we have successfully demonstrated that the proposed one step scheme achieves a second-order convergence rate. This result is further confirmed by the numerical examples constructed in the following section.
\end{flushleft}
\begin{flushleft}
\refstepcounter{remark}
\textbf{\theremark} 
 This paper differs from \cite{ZWCLPS06} and \cite{CDMK14} in that our one step scheme is fully explicit for both $Y$ and $Z$, whereas the schemes in \cite{ZWCLPS06} and \cite{CDMK14} are explicit for $Y$ but implicit for $Z$. Moreover, when $\alpha = 1$, our method reduces to the Crank Nicolson method, as discussed in \cite{CDMK14, ZWCLPS06}, demonstrating that the Crank Nicolson method is a special case within our framework.
\end{flushleft}

\section{Numerical experiments}


In this section, we illustrate the accuracy and effectiveness of the aforementioned scheme through numerical experiments. The time interval $[0, T]$ is uniformly divided into $N$ parts, giving a time step size $h_t = \frac{T}{N}$, where $T$ represents the terminal time. The time step sizes adopted in our experiments are $N = \frac{1}{2^m}$ ($m = 3, \ldots, 7$). To compute the conditional expectation \( \mathbb{E}_{t_n}^x[\cdot] \) within the proposed scheme, we utilize the Gauss-Hermite quadrature method, together with spatial interpolation. By choosing a sufficiently large number of Gauss-Hermite quadrature points \( K \), the local truncation error becomes insignificant; in the numerical examples provided, we set \( K = 12 \).
We apply cubic spline interpolation for the spatial interpolation process.

In the tables below, the errors $\text{err}_y := |Y_0 - Y_0^{\pi}|$ and $\text{err}_z := |Z_0 - Z_0^{\pi}|$ denote the differences between the exact solution \((Y_t, Z_t)\) of equation (1.1) at \(t = 0\) and the corresponding numerical results \((Y_0^{\pi}, Z_0^{\pi})\) obtained from the scheme.
 Let CR denote the convergence rate with respect to the time step size \(h_t\), which can be calculated using the least squares regression method. All the numerical tests are implemented in Matlab R2023b on a desktop computer with Intel Core i5-12600KF 10-Core Processor (4.9GHz) and 32 GB DDR4 RAM (3600MHz). The solution process is presented in the following algorithm.
\begin{center}
\begin{minipage}{14cm} 
\captionsetup{labelformat=empty} 
\begin{algorithm}[H]
    \SetAlgoNoLine 
	\SetAlgoLined
	\caption{The algorithm for solving FBSDEs based on Scheme (\ref{NumSch}).} 
	\KwIn{$K, \alpha, N, Y_N^\pi, Z_N^\pi$}
	
	\For{$i = N-1$ \KwTo $0$}{
		Compute the interpolation points at time level \( t_{n+1} \)\
		
		\For{$k = 1$ \KwTo $K$}{
			$Y^\pi_{i+1-\alpha} = \mathbb{E}_{t_{i+1-\alpha}}^x\left[Y^\pi_{i+1}+\alpha h_t f^\pi_{i+1}\right]$ \\
            $Z^\pi_{i+1-\alpha} = \mathbb{E}_{t_{i+1-\alpha}}^x\left[\frac{1}{\alpha h_t}Y^\pi_{i+1}\Delta W^\top_{i+1-\alpha,i+1}+f_{i+1}^\pi\Delta W^\top_{i+1-\alpha,i+1}\right]$ \\
		}
		
		\For{$k = 1$ \KwTo $K$}{
			$Y_i^\pi = \mathbb{E}_{t_i}^x\left[Y^\pi_{i+1}+ \frac{h_t}{2\alpha}\widetilde{f}^\pi_{i+1-\alpha }+ h_t\left(1-\frac{1}{2\alpha}\right)f^\pi_{i+1}\right]$ \\
            $Z^\pi_i = \mathbb{E}_{t_i}^x\left[\frac{2}{h_t}Y^\pi_{i+1}\Delta W_{i,i+1}^{\top}
            +\frac{1}{\alpha}\widetilde{f}^\pi_{i+1-\alpha }\Delta W_{i,i+1-\alpha}^{\top}+ \frac{2\alpha-1}{\alpha}f^\pi_{i+1}\Delta W_{i,i+1}^{\top}
            -Z^\pi_{i+1}\right]$ \\
		}
	}
	
	\KwOut{$Y_0^\pi$, $Z^\pi_0$}
\end{algorithm}
\end{minipage}
\end{center}

\textbf{Example 1.} Consider the BSDE as below:
\begin{equation}
\begin{cases}
-dY_t = (-Y_t^3 + 2.5Y_t^2 - 1.5Y_t)dt - Z_t dW_t, & \text{for } t \in [0, T], \\
Y_T = \dfrac{\exp(W_T + T)}{\exp(W_T + T) + 1},
\end{cases}
\label{BSDEdetail}
\end{equation}
The solution to the  BSDE (\ref{BSDEdetail}) is

\[
Y_t = \frac{\exp(W_t + t)}{\exp(W_t + t) + 1}, \quad Z_t = \frac{\exp(W_t + t)}{(\exp(W_t + t) + 1)^2}.
\]


In this example, we set \( T = 1 \) and the exact solution of the equation (\ref{BSDEdetail}) at \( t = 0 \) is \( (Y_0, Z_0) = (0.5, 0.25) \). Table \ref{Err-Y-Z-ex1-1} presents the absolute errors, \(\text{err}_y\) and \(\text{err}_z\), along with their convergence rates, for various values of \(\alpha\) using the described method. The run time of the scheme for Example 1 is provided in Table \ref{Err-Y-Z-ex1-2}. Figure \ref{fig:Example1} shows the log-log plots of err\(_y\) and err\(_z\) against time interval \(h_t\).

\begin{table*}[ht]
\caption{Error values and convergence rates for the scheme in Example 1.}
\begin{center}
  \setlength{\tabcolsep}{4pt} 
  \renewcommand{\arraystretch}{1.1} 
  \small
  \begin{tabular}{@{}c|c|c|c|c|c|c|c|c@{}}
    \Xhline{1pt}
    \multirow{2}{*}{$N$} & \multicolumn{2}{c|}{$\alpha = \frac{1}{4}$} & \multicolumn{2}{c|}{$\alpha = \frac{1}{2}$} & \multicolumn{2}{c|}{$\alpha = \frac{3}{4}$} & \multicolumn{2}{c@{}}{$\alpha = 1$} \\ \cline{2-9}
    & $|Y_0 - Y^0|$ & $|Z_0 - Z^0|$ & $|Y_0 - Y^0|$ & $|Z_0 - Z^0|$ & $|Y_0 - Y^0|$ & $|Z_0 - Z^0|$ & $|Y_0 - Y^0|$ & $|Z_0 - Z^0|$ \\ \Xhline{1pt}
    8 & 1.3590E-04 & 2.4418E-05 & 1.2017E-04 & 1.0366E-04 & 1.0258E-04 & 1.9060E-04 & 8.2793E-05 & 2.8535E-04 \\
    16 & 3.4884E-05 & 4.9078E-06 & 3.0986E-05 & 2.6713E-05 & 2.6797E-05 & 4.9519E-05 & 2.2350E-05 & 7.3383E-05 \\
    32 & 8.8581E-06 & 1.1203E-06 & 7.8802E-06 & 6.8085E-06 & 6.8659E-06 & 1.2627E-05 & 5.8187E-06 & 1.8580E-05 \\
    64 & 2.2179E-06 & 2.3749E-07 & 1.9729E-06 & 1.6882E-06 & 1.7234E-06 & 3.1556E-06 & 1.4697E-06 & 4.6399E-06 \\
   128 & 5.5778E-07 & 5.2154E-08 & 4.9651E-07 & 4.1841E-07 & 4.3466E-07 & 7.8672E-07 & 3.7230E-07 & 1.1572E-06 \\
    \Xhline{1pt}
    CR    & 1.9833  & 2.2111  & 1.9811  & 1.9889  & 1.9724  & 1.9813  & 1.9520  & 1.9875  \\
    \Xhline{1pt}
  \end{tabular}
\end{center}
\label{Err-Y-Z-ex1-1}
\end{table*}

\begin{table}[!ht]
\vspace{-1.5em}
\caption{Run time({s}) of the scheme for Example 1.}
\centering
\setlength{\tabcolsep}{4pt} 
\renewcommand{\arraystretch}{1.1} 
\small
\begin{tabular}{c|ccccc}
\toprule
\diagbox[width=6em,height=2.5em]{$\alpha$}{N} & 8 & 16 & 32 & 64 & 128 \\
\midrule
$1/4$ & 4.5610  & 9.0057  & 18.0326  & 36.4851  & 74.9443  \\
$1/2$ & 4.5909  & 9.0988  & 18.1107  & 36.1546  & 75.3838  \\
$3/4$ & 4.5842  & 9.0513  & 18.6429  & 38.0595  & 74.9173  \\
$1$   & 4.5079  & 8.9543  & 17.9163  & 36.0204  & 74.6518  \\
\bottomrule
\end{tabular}
\label{Err-Y-Z-ex1-2}
\end{table}
\clearpage
\begin{figure}[!ht]
\begin{center}
\includegraphics[
trim=0.000000in 0.000000in 0.000000in -0.398623in, 
width=0.9\textwidth] 
{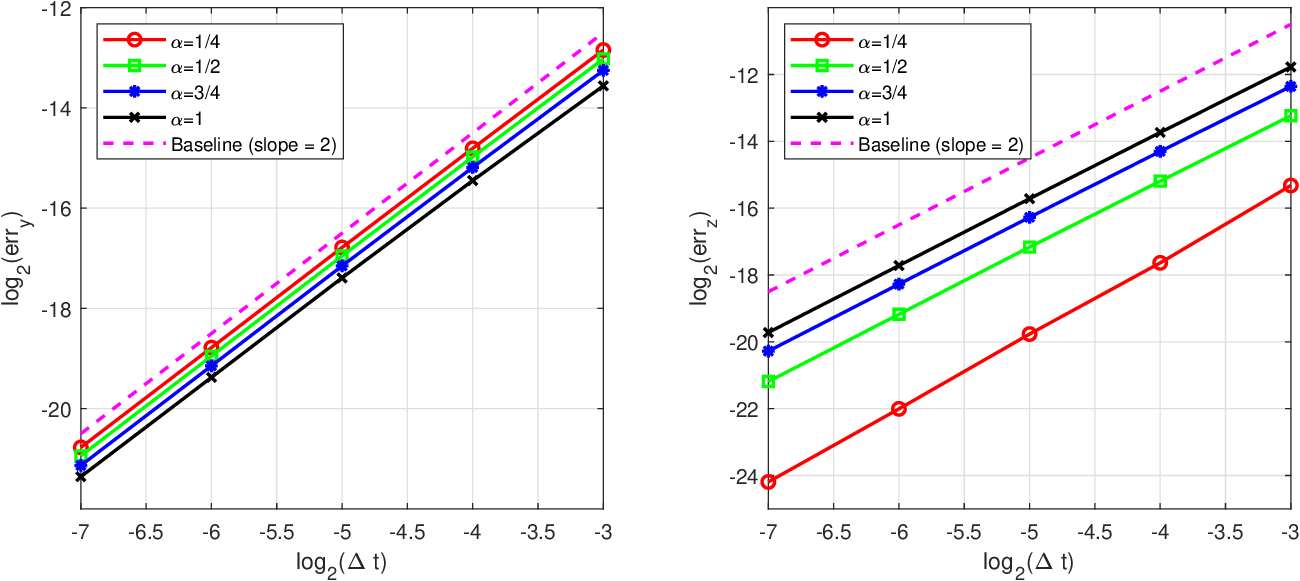} 
\caption{The plot of $\text{err}_{y}$ against time interval \(\Delta t\) in Example 1.}
\label{fig:Example1}
\end{center}
\end{figure}

\textbf{Example 2.} The FitzHugh-Nagumo (FHN) equation is widely used in biology and genetics, particularly in the mathematical modeling of electrophysiological systems and the mathematical modeling of neuronal dynamics.The following is the partial differential equation (PDE) form of a simplified case of the FHN equation (taken from \cite{LADRGSL15}):

\begin{equation}
\left\{
\begin{aligned}
&- \partial_t u - \frac{1}{2} \Delta u - (cu^3 + bu^2 - au) = 0, \\
&u(T, x) = g(x).
\end{aligned}
\right. \label{4-2}
\end{equation}
Let \( c = -1 \), \( b = 1 + a \), \( a \in \mathbb{R} \), and \( g(x) = (1 + e^x)^{-1} \), then

\begin{equation}
u(t, x) = (1 + \exp \{ x - (0.5 - a)(T - t) \})^{-1} \in C_b^\infty([0, T] \times \mathbb{R}),
\end{equation}
is the solution of the equation \eqref{4-2}. The corresponding FBSDEs is given by:

\begin{equation}
\left\{
\begin{aligned}
&X_t = x_0 + \int_0^t dW_s, \\
&Y_t = g(X_T) + \int_t^T [-Y_s^3 + (1 + a)Y_s^2 - aY_s] ds - \int_t^T Z_s dW_s.
\end{aligned}
\right. \label{4-3}
\end{equation}
From It\^o-Taylor formula, the analytical solutions of the BSDE \eqref{4-3} can be represented in the following form:

\[
Y_t = \frac{1}{1 + \exp \{ X_t - (0.5 - a)(T - t) \}}, \quad Z_t = -\frac{\exp \{ X_t - (0.5 - a)(T - t) \}}{(1 + \exp \{ X_t - (0.5 - a)(T - t) \})^2}.
\]

Let \(T=1\), \(a=-0.5\), and \(x_0=1\), then the exact solution of the equation (\ref{4-3}) at \( t = 0 \) is \( (Y_0, Z_0) = (0.5, -0.25) \). Table \ref{Err-Y-Z-ex2-1} presents the absolute errors \( \text{err}_y \) and \( \text{err}_z \) obtained using the aforementioned method for the various values of \(\alpha\), accompanied by their respective convergence rates. The run time of the scheme for Example 2 is provided in Table \ref{Err-Y-Z-ex2-1.5}. Figure \ref{fig:Example2-0.5} shows the log-log plots of err\(_y\) and err\(_z\) against time interval \(h_t\).

Let \(T=1\), \(a=-1\), and \(x_0=1.5\), then the the exact solution of the equation (\ref{4-3}) at \( t = 0 \) is \( (Y_0, Z_0) = (0.5, 0.25) \). Table \ref{Err-Y-Z-ex2-2} presents the absolute errors \( \text{err}_y \) and \( \text{err}_z \) obtained using the aforementioned method for the various values of \(\alpha\), accompanied by their respective convergence rates. The run time of the scheme for Example 2 is provided in Table \ref{Err-Y-Z-ex2-2.5}. Figure \ref{fig:Example2-2} shows the log-log plots of err\(_y\) and err\(_z\) against time interval \(h_t\).

From the errors, convergence rates and log-log plots listed in Tables 1–6 and Figure 1-3, we conclude that:

1. In the examples provided, the scheme demonstrates a convergence rate close to order two, as illustrated by Tables 1, 3, and 5. This finding aligns with the second order error estimate presented in Theorem 4.5.

2. As illustrated in Figures 1-3, the slopes of the fitted curves closely align with the baseline slope of 2, regardless of the value of $\alpha$. The specific outcomes for different $\alpha$ are influenced by the particular examples used.

3. The run time in Tables 2, 4 and 6 demonstrate that, regardless of the value of $\alpha$, the scheme exhibits approximately the same efficiency. In the examples discussed, for a fixed N, the run time for the four different values of $\alpha$ show minimal variation.

\begin{table*}[!ht]
\caption{Error values and convergence rates for the scheme in Example 2 (\(a=-0.5\), and \(x_0=1\)).}
\begin{center}
  \setlength{\tabcolsep}{3pt} 
  \renewcommand{\arraystretch}{1.1} 
  \small 
  \begin{tabular}{@{}c|c|c|c|c|c|c|c|c@{}}
    \Xhline{1pt}
    \multirow{2}{*}{$N$} & \multicolumn{2}{c|}{$\alpha = \frac{1}{4}$} & \multicolumn{2}{c|}{$\alpha = \frac{1}{2}$} & \multicolumn{2}{c|}{$\alpha = \frac{3}{4}$} & \multicolumn{2}{c@{}}{$\alpha = 1$} \\ \cline{2-9}
    & $|Y_0 - Y^0|$ & $|Z_0 - Z^0|$ & $|Y_0 - Y^0|$ & $|Z_0 - Z^0|$ & $|Y_0 - Y^0|$ & $|Z_0 - Z^0|$ & $|Y_0 - Y^0|$ & $|Z_0 - Z^0|$ \\ \Xhline{1pt}
    8  & 1.3590E-04 & 2.4418E-05 & 1.2017E-04 & 1.0366E-04 & 1.0258E-04 & 1.9060E-04 & 8.2793E-05 & 2.8535E-04 \\
    16 & 3.4883E-05 & 4.9075E-06 & 3.0985E-05 & 2.6713E-05 & 2.6795E-05 & 4.9519E-05 & 2.2349E-05 & 7.3383E-05 \\
    32 & 8.8556E-06 & 1.1198E-06 & 7.8777E-06 & 6.8080E-06 & 6.8634E-06 & 1.2626E-05 & 5.8162E-06 & 1.8579E-05 \\
    64 & 2.2129E-06 & 2.3651E-07 & 1.9679E-06 & 1.6873E-06 & 1.7184E-06 & 3.1546E-06 & 1.4648E-06 & 4.6389E-06 \\
   128 & 5.4779E-07 & 5.0245E-08 & 4.8655E-07 & 4.1652E-07 & 4.2471E-07 & 7.8485E-07 & 3.6235E-07 & 1.1553E-06 \\
    \Xhline{1pt}
    CR    & 1.9888  & 2.2225  & 1.9873  & 1.9903  & 1.9795  & 1.9820  & 1.9603  & 1.9880  \\
    \Xhline{1pt}
  \end{tabular}
\end{center}
\label{Err-Y-Z-ex2-1}
\end{table*}

\begin{table}[!ht]
\caption{Run time({s}) of the scheme for Example 2 (\(a=-0.5\), and \(x_0=1\)).}
\centering
\setlength{\tabcolsep}{4pt} 
\renewcommand{\arraystretch}{1.1} 
\small 
\begin{tabular}{c|ccccc}
\toprule
\diagbox[width=6em,height=2.5em]{$\alpha$}{N} & 8 & 16 & 32 & 64 & 128 \\
\midrule
$1/4$ & 4.4760  & 8.9910  & 17.5931  & 35.7508  & 72.3901  \\
$1/2$ & 4.4913  & 8.8813  & 17.8321  & 35.4145  & 72.4596  \\
$3/4$ & 4.4974  & 8.9137  & 17.7552  & 35.3493  & 73.3658  \\
$1$   & 4.5146  & 8.8281  & 17.9691  & 36.1171  & 71.7899  \\
\bottomrule
\end{tabular}
\label{Err-Y-Z-ex2-1.5}
\end{table}
\vspace{-1.5em}
\begin{figure}[!ht]
\begin{center}
\includegraphics[
trim=0.000000in 0.000000in 0.000000in -0.398623in, 
width=0.9\textwidth] 
{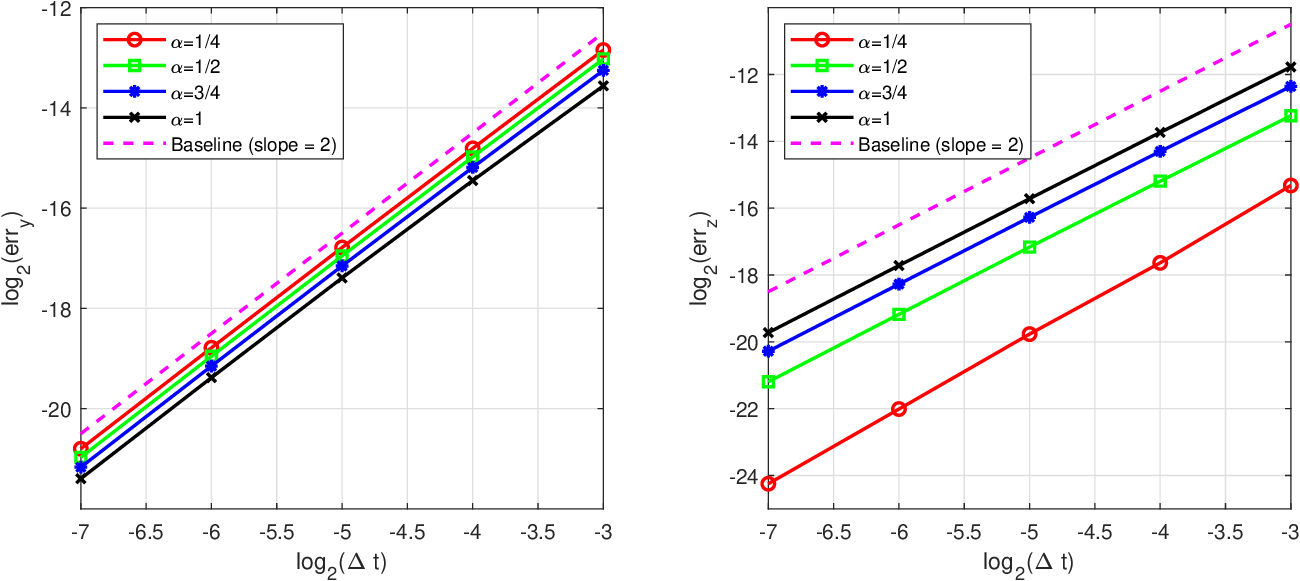} 
\caption{Log-log plot of $\text{err}_{y}$ against time interval \(\Delta t\) in Example 2 ($a=-0.5$, and $x_0=1$).}
\label{fig:Example2-0.5}
\end{center}
\end{figure}

\begin{table*}[!ht]
\caption{Error values and convergence rates for the scheme in Example 2 (\(a=-1\), and \(x_0=1.5\)).}
\begin{center}
  \setlength{\tabcolsep}{3pt} 
  \renewcommand{\arraystretch}{1.1} 
  \small 
  \begin{tabular}{@{}c|c|c|c|c|c|c|c|c@{}}
    \Xhline{1pt}
    \multirow{2}{*}{$N$} & \multicolumn{2}{c|}{$\alpha = \frac{1}{4}$} & \multicolumn{2}{c|}{$\alpha = \frac{1}{2}$} & \multicolumn{2}{c|}{$\alpha = \frac{3}{4}$} & \multicolumn{2}{c@{}}{$\alpha = 1$} \\ \cline{2-9}
    & $|Y_0 - Y^0|$ & $|Z_0 - Z^0|$ & $|Y_0 - Y^0|$ & $|Z_0 - Z^0|$ & $|Y_0 - Y^0|$ & $|Z_0 - Z^0|$ & $|Y_0 - Y^0|$ & $|Z_0 - Z^0|$ \\ \Xhline{1pt}

    8  & 2.6130E-04 & 1.0356E-04 & 3.1917E-04 & 3.7300E-04 & 3.7406E-04 & 6.6506E-04 & 4.2542E-04 & 9.8084E-04 \\
    16 & 6.6635E-05 & 2.2732E-05 & 8.2900E-05 & 9.6144E-05 & 9.8711E-05 & 1.7250E-04 & 1.1412E-04 & 2.5192E-04 \\
    32 & 1.6869E-05 & 5.3519E-06 & 2.1155E-05 & 2.4429E-05 & 2.5385E-05 & 4.3883E-05 & 2.9564E-05 & 6.3724E-05 \\
    64 & 4.2222E-06 & 1.2762E-06 & 5.3214E-06 & 6.1340E-06 & 6.4137E-06 & 1.1040E-05 & 7.4996E-06 & 1.5993E-05 \\
   128 & 1.0505E-06 & 3.0869E-07 & 1.3289E-06 & 1.5341E-06 & 1.6063E-06 & 2.7656E-06 & 1.8830E-06 & 4.0030E-06 \\
    \Xhline{1pt}
    CR  & 1.9897  & 2.0935  & 1.9777  & 1.9822  & 1.9671  & 1.9785  & 1.9567  & 1.9851  \\
    \Xhline{1pt}
  \end{tabular}
\end{center}
\label{Err-Y-Z-ex2-2}
\end{table*}
\vspace{-1.5em}

\begin{table}[!ht]
\caption{Run time({s}) of the scheme for Example 2 (\(a=-1\), and \(x_0=1.5\)).}
\centering
\setlength{\tabcolsep}{4pt} 
\renewcommand{\arraystretch}{1.1} 
\small 
\begin{tabular}{c|ccccc}
\toprule
\diagbox[width=6em,height=2.5em]{$\alpha$}{N} & 8 & 16 & 32 & 64 & 128 \\
\midrule
$1/4$ & 4.3599  & 8.4507  & 17.0215  & 34.1154  & 69.5319  \\
$1/2$ & 4.2851  & 8.5262  & 16.9074  & 33.9132  & 69.1714  \\
$3/4$ & 4.2347  & 8.3933  & 16.5688  & 33.4880  & 69.3673  \\
$1$   & 4.2455  & 8.3685  & 16.9407  & 33.5637  & 68.5035  \\
\bottomrule
\end{tabular}
\label{Err-Y-Z-ex2-2.5}
\end{table}
\clearpage
\begin{figure}[!ht]
\begin{center}
\includegraphics[
trim=0.0in 0.0in 0.0in -0.4in, 
width=0.9\textwidth] 
{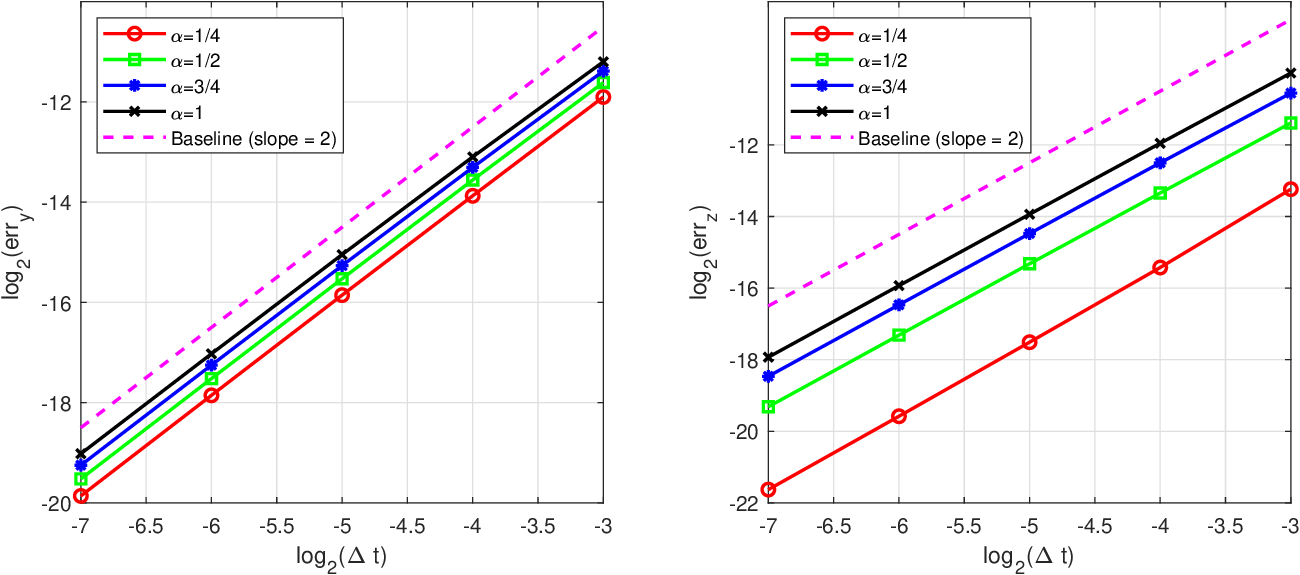}
\caption{Log-log plot of $\text{err}_{y}$ against time interval \(\Delta t\) in Example 2 ($a=-1$, and $x_0=1.5$).}
\label{fig:Example2-2}
\end{center}
\end{figure}

\vspace{1.0em}
\section{Conclusions}

We have proposed a novel explicit one step scheme for solving decoupled forward backward stochastic differential equations. The stability analysis of the proposed numerical scheme has been conducted. We subsequently provided a rigorous proof that the proposed scheme achieves the second order convergence rates. Additionally, when $\alpha = 1$, our scheme degenerates into the Crank-Nicolson method as presented in \cite{CDMK14,ZWCLPS06}, making it a special case within our framework. Furthermore, this work provides a reference for how we can develop a new one step higher order scheme.

\vskip 3mm

\textbf{ Declarations}

The authors confirm that there are no conflicts of interest regarding this publication. Since this study did not involve the generation or analysis of any datasets, data sharing is not applicable. All authors made equal contributions to the research and have reviewed and approved the final version of the manuscript.

\textbf{ Acknowledgements}

The authors extend their gratitude to the editor for overseeing the review process and to the referees for their insightful feedback and recommendations.

\end{document}